\def\cm{{\mathcal M}}
\renewcommand\hat{\widehat}
\def\r{\right}
\def\lf{\left}
\def\bint{{\ifinner\rlap{\bf\kern.30em--}
\int\else\rlap{\bf\kern.35em--}\int\fi}\ignorespaces}
\def\sbint{{\ifinner\rlap{\bf\kern.32em--}
\hspace{0.078cm}\int\else\rlap{\bf\kern.45em--}\int\fi}\ignorespaces}
\newtheorem{theorem}{Theorem}[section]
\newtheorem{lemma}[theorem]{Lemma}
\newtheorem{Proposition}[theorem]{Proposition}
\theoremstyle{assumption}
\theoremstyle{definition}
\newtheorem{remark}[theorem]{Remark}
\numberwithin{equation}{section}
\numberwithin{equation}{section}
\numberwithin{equation}{section}
\title[]{A noncommutative maximal inequality for Fej\'{e}r means on totally disconnected non-Abelian groups}
\author{Fugui Ding}
\address{Fugui Ding, School of Mathematics and Statistics, Wuhan University, 430072 Wuhan, China.}
\email{fgding163@163.com}
\author{Guixiang Hong}
\address{Guixiang Hong, Institute for Advanced Study in Mathematics, Harbin Institute of Technology, 15000 Harbin, China}
\email{gxhong@hit.edu.cn}
\author{Xumin Wang}
\address{Xumin Wang, Seoul National University, 08826  Seoul, South Korea} 
\email{xumin.wang1124@gmail.com}
\begin{document}
	-
	\begin{abstract}
		In this paper, we explore Fourier analysis for noncommutative $L_p$ space-valued functions on $G$, where $G$ is a totally disconnected non-abelian compact group. By additionally assuming that the value of these functions remains invariant within each conjugacy class,  we establish a noncommutative maximal inequality for Fej\'er means utilizing the associated character system of $G$. This is an operator-valued version of the classical result due to G\'at in \cite{Gat06}. We follow essentially the classical sketch, but due to the noncommutativity, many classical arguments have to be revised. Notably, compared to the classical results. the bounds of our  estimates are explicity calculated.
	\end{abstract}
	
\maketitle

\section{Background and the main result}

Over the past two decades,  there has been significant attention on  totally disconnected groups, especially in non-abelian cases. These groups have found applications in various areas, for instance, in topological group theory, Lie group theory, and operator algebras (see e.g.  \cite{Bur18, CaMo18, CaWi21, IlSp07, Rau19, RiZa10, SpSt13, Laz65, Glo18, BuMo00, BuMo002, Pal01, ShWi13, Wil15, Wil98}). 

In the context of Fourier analysis, our interest lies in the pointwise convergence of the character system of totally disconnected \emph{compact} groups with coefficients in noncommutative $L_p$-spaces throughout this paper. The character system arise canonically from the fact that any totally disconnected compact group can be realized as the complete direct product of its factor structures (see e.g. \cite{Pon66}). 
In other words, a totally disconnected compact group can be constructed as follows. 
Consider a sequence  of number $m:=(m_k)_{k \in \mathbb{N}}$ where  $ m_k \ge 2,k \in \mathbb{N}$  and denote $G_k$ as finite groups of order  $m_k$. Each group $G_k$ is equipped with the discrete topology and the normlized Haar measure $\mu_k$ such that $\mu_k(G_k)=1$, that is, the measure of every singleton of each $G_k$ is ${1}/{m_k}$. The totally disconnected compact group $G$ is composed of the complete direct product of $G_k$ and  equipped with product topology, operation and the corresponding product measure $\mu$. We denote it by 
$$G:=\bigtimes_{k=0}^{\infty}G_k.$$
By the definition of operation, it is evident that $G$ is non-abelian if and only if there exists at least one $G_k$ that is non-ablien. Denote $\rho(G):=\sup_{k \in \mathbb{N}} m_k $. The group $G$ is called bounded if $\rho(G)<\infty$ and unbounded otherwise.

Let $p_k$ denote the number of conjugacy classes in the group $G_k$.  Define
$P_0 :=1,P_{k+1}=p_kP_k$.
Then there is a unique expansion for each natural number $n$,
\begin{equation}\label{eq:unique expansion of n}
	n = \sum_{k=0}^{\infty } n_kP_k, \qquad 0 \le n_k \le p_k-1. 
\end{equation}
Denote by 
$$r_k^0(x_k) = 1,\quad r_k^1(x_k),\cdots , r_k^{p_k-1}(x_k),$$
the irreducible characters of the group $G_k$. 
The domain of $r_k^d$ extends to the whole group $G$ by
$$r_k^d(x):=r_k^d(x_k) , \qquad d \in \{ 0,\cdots,p_k-1\}, \quad G\ni x=(x_k)_{k\geq0}.$$
Now the family of irreducible characters of $G$ can be indexed by natural numbers as following: for any $n\in \mathbb N$, using the unique expansion~\eqref{eq:unique expansion of n}, we define
$$\chi_n(x) = \prod_{k=0}^{\infty} r_k^{n_k}(x).$$
Based on these characters, one may define the Dirichlet/Fej\'er means for an integrable function $f$ that is {\it constant} in each conjugacy class: 
\begin{align}\label{defFejermeans}S_nf(y):=\sum_{k=0}^{n-1}\hat{f}(k)\chi_k(y) \quad \mathrm{and} \quad \sigma_n f(y):=\frac{1}{n}\sum_{k=0}^{n-1}S_kf(y),\end{align}
where the Fourier coefficients $(\widehat{f}(k))_{k\in \mathbb{N}}$ are defined by $\hat{f}(k):=\int_{G}f(x)\bar{\chi}_k d\mu(x).$

When the group $G$ is abelian, the pointwise convergence problem of the Dirichlet/Fej\'er means has been well studied. For instance, in the  case where $G_k=\mathbb Z_2$ for all $k\geq0$, the resulting character system is the well-known Walsh system. In  this scenario,  the pointwise convergence of Fej\'er means of integrable functions was established by Fine  in  \cite{Fin55}  (see also Schipp \cite{Sch75} for alternative proof), while that of Dirichlet means for square integrable functions was established by Billard \cite{Bil67} after Carleson's resolution of the famous Lusin conjecture for Fourier series on the circle. 
The Walsh system was generalized to $p$-series fields ($G_k=\mathbb Z_p$ where $p$ is a prime), Vilenkin systems ($G_k=\mathbb Z_m$ where $m$ is any fixed integer) and general bounded Vilenkin systems ($G_k=\mathbb Z_{m_k}$). 
Many Mathematicals have explored  Fourier analysis on these system. For example,  Pal and Simon \cite{PaSi77}  extended Taibleson's result \cite{Tai67} on pointwise convergence of Fej\'er means for integrable functions over $p$-series fields to all the bounded Vilenkin systems. Moreover, Gosselim \cite{Gos73} obtained the almost everywhere (abbreviated as {\it a.e.}) convergence of Dirichlet means for functions in $L_p$ with $p>1$ for all bounded Vilenkin systems.
However, the problems on the unbounded Vilenkin systems become slightly more complicated. Firstly, Price's result \cite{Pri57} indicates that one can expect only the {\it a.e.} convergence even for the Fej\'er means of continuous functions. After Young's result \cite{You96} on {\it a.e.} convergence of any lacunary Dirichlet means, G\'at \cite{Gat99} obtained the {\it a.e.} convergence of the Fej\'er means for functions in $L_p$ with $p>1$. Later on, G\'at \cite{Gat03} extended his previous result to the case $p=1$ but only for canonical lacunary Fej\'er means.  The {\it a.e.} convergence of Dirichlet means for $p>1$ and that of Fej\'er means for $p=1$ seem to remain open for unbounded Vilenkin systems. 

In the non-abelian case of $G$, the problem of   pointwise convergence becomes quite subtle with only one positive non-trivial result as far as the authors are aware of. The reason behind this complexity might be explained as follows:  most of the harmonic analysis methods for abelian groups rely on the fact that the absolute value of characters is $1$. However,  in non-abelien groups, the resulting character might be unbounded or could even take a value of zero. This was illustrated in \cite[Page 29]{Gat06} for the special case where $G_k=S_3$ is the permutation group for all $k\in\mathbb N$. 
In the aforementioned paper \cite{Gat06}, G\'at obtained {\it the only positive result} in the literature, namely, the Fej\'er means is {\it a.e.} convergent for square integrable functions that are constant on each conjugacy class. Additionally, in the same paper,  he  established  that for any $1<p<2$, there exist some bounded non-abelian group $G$ and some $f\in L_p(G)$ such that the Fej\'er means of $f$ is not {\it a.e.} convergent.

On the other hand, within the realm of noncommutative harmonic analysis, motivated by operator algebra, quantum probability and noncommutative ergodic theory,  there has been extensive research on maximal inequalities and the almost uniform convergence. For a more detailed exploration of the development of noncommutative maximal inequalities, readers may find valuable insights in \cite{Jun02, JuXu07, HLW21, HWW, HLX23, HRW, CXY13}.

Motivated by all above, we  aim to establish a noncommutative analogue of G\'at's result \cite{Gat06} for any totally disconnected compact bounded group. More precisely, we will establish a noncommutative maximal inequality for Fej\'{e}r means of square integrable functions with values in noncommutative $L_2$-space, and subsequently deduce the almost uniform convergence in a standard manner.

In order to state the main result, we introduce additional notations. Consider a semifinite  von Neumann algebra $\mathcal M$ equipped with a normal semifinite faithful trace $\tau$. The associated noncommutative $L_p$-spaces are denoted by $L_p(\mathcal M)$. The vector-valued noncommutative $L_p$-spaces $L_p(\mathcal{M};\ell_\infty)$ will enable us to formulate $L_p$-norm of the classical maximal function. Our focus in this paper is primarily on the tensor von Neumann algebra $\mathcal N=L_\infty(G, \mu)\bar{\otimes}\mathcal M$ with the canonical tensor trace. Any element in $L_p(\mathcal N)$ can be viewed as an $L_p(\mathcal M)$-valued function that is $p$-integrable on $G$.  Let $S_{\mathcal M}$ be the dense $\ast$-subalgebra of $\mathcal M$ in which the trace of the support of every element is finite. For $S_{\mathcal M}$-valued simple function $f$ that is constant in each conjugacy class,  the Fej\'er means can be defined without any ambiguity as following
$$\sigma_n f(y):=\frac{1}{n}\sum_{k=0}^{n-1}S_kf(y).$$
This definition then extends to the $L_2(\mathcal N)$ spaces thanks to the Hilbert-valued Plancherel theorem. 

Because any character cannot distinguish between group elements within the same conjugacy class, in this paper, {\it we exclusively focus on functions that keep a constant value within each conjugacy class.}

Our main result reads as follows.

\begin{theorem}\label{1.1}
	Let $f \in L_2(\mathcal{N})$. There is a universal constant $C$ such that 
	$$\left\|(\sigma_nf)_n    \right\|_{L_2(\mathcal N; \ell_\infty)}  \leq C\rho(G)^2\Vert    f \Vert_2,$$
	and $\sigma_nf\rightarrow f$ bilaterally almost uniformly as $n\rightarrow\infty$.
\end{theorem}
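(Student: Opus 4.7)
The plan is to follow G\'at's scheme from \cite{Gat06} adapted to the operator-valued setting, with the maximal inequality as the core technical statement and the almost uniform convergence following from it by a standard approximation argument.

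\textbf{Step 1: Kernel decomposition.} First I would write $\sigma_n f = f \ast K_n$, where $K_n = \frac{1}{n}\sum_{k=0}^{n-1} D_k$ is the Fej\'er kernel and $D_k = \sum_{j=0}^{k-1}\chi_j$ is the Dirichlet kernel. Using the unique expansion $n = \sum_k n_k P_k$, the kernel $K_n$ splits into a ``martingale part'' built from the block kernels $D_{P_k}$, plus ``oscillatory remainders'' mixing characters coming from different levels. Since $D_{P_k}$ equals, up to the factor $P_k$, the indicator of the open subgroup $G^k := \prod_{j \geq k} G_j$, we have $f \ast D_{P_k} = E_k f$, the conditional expectation of $f$ onto $L_\infty(G/G^k)\,\overline{\otimes}\,\mathcal M$; this identity is the entry point for applying noncommutative martingale tools in the operator-valued setting.

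\textbf{Step 2: Main maximal estimate.} Junge's noncommutative Doob inequality directly yields $\|(E_k f)_k\|_{L_2(\mathcal N;\ell_\infty)} \ls \|f\|_2$, which takes care of the martingale part. Each oscillatory remainder carries a scalar character $\chi_m$ as a multiplier together with a conditional-expectation-type factor. Because $f$ is constant on conjugacy classes, one can isolate the character and apply the pointwise bound $|\chi_m(y)| \leq \dim(\chi_m) \leq m_k \leq \rho(G)$ whenever $\chi_m$ is drawn from level $k$. Summing the contributions across the decomposition and matching them against the maximal function of the martingale produces the explicit $\rho(G)^2$ factor announced in the theorem.

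\textbf{Step 3: Almost uniform convergence.} For the dense subclass of trigonometric polynomials $f = \sum_{k < N} a_k \chi_k$ with $a_k \in S_{\mathcal M}$ (and $f$ constant on conjugacy classes), the convergence $\sigma_n f \to f$ is immediate in the operator norm, since $\sigma_n$ acts as the identity on each $\chi_k$ for $n > k$. For arbitrary $f \in L_2(\mathcal N)$ constant on conjugacy classes, approximate by such polynomials and invoke the maximal inequality: this realizes the standard noncommutative Banach principle and delivers the bilaterally almost uniform convergence of $\sigma_n f$ to $f$.

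\textbf{Main obstacle.} The hardest part is the bookkeeping of Step 2. In G\'at's abelian argument one freely rearranges character sums because $|\chi_k| \equiv 1$, whereas here characters may vanish or reach the value $\sqrt{m_k}$, so every pointwise scalar cancellation must be replaced by an operator-valued Cauchy--Schwarz or Doob-type estimate that respects the noncommutativity of $\mathcal M$. Keeping left and right multiplications by $\mathcal M$-elements in the correct order throughout the decomposition, while quantitatively tracking character contributions, is what forces the explicit constant $\rho(G)^2$ rather than a dimensionless universal constant.
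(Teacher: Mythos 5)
Your Step 3 is fine and matches how the paper treats the convergence (density plus the maximal inequality), and your Step 1 is close in spirit, though the identity you quote is not quite right in the non-abelian setting: $D_{P_k}(y,\cdot)$ is not $P_k$ times the indicator of the open subgroup $\prod_{j\ge k}G_j$ but the normalized indicator $\mu^{-1}(J_k(y))\boldsymbol{1}_{J_k(y)}$ of the ``conjugacy tube'' $J_k(y)$, and the identity $S_{P_k}f=E_kf$ only holds because $f$ is constant on conjugacy classes (Lemmas \ref{4.2} and \ref{4.3}). The genuine gap is in Step 2, which is where all the work lies. First, the quantitative device you propose is false: for a product character $\chi_m=\prod_k r_k^{m_k}$ one only has $|r_k^{m_k}(y)|\le d_k^{m_k}\le\sqrt{m_k}$ at each level, so $|\chi_m(y)|$ can be as large as a product of $\sqrt{m_k}$ over all levels where $m_k\neq0$, which is unbounded even when $\rho(G)<\infty$; there is no pointwise bound $|\chi_m|\le\rho(G)$. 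The paper never bounds $|\chi_n|$ pointwise; it only uses level-wise identities and inequalities such as $\sum_{i=0}^{p_\ell-1}|r_\ell^i(y)|^2=\mu_\ell^{-1}(j_\ell(y))$ and $\sum_{i=0}^{k_\ell-1}|r_\ell^i(y)|^2\le m_\ell\le\rho(G)$, and it disposes of the full character $\chi_{n^{(s)}}$ through a Plancherel argument (Lemma \ref{4.4}), not through a sup-norm bound.

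Second, you give no mechanism for controlling the supremum over the infinitely many $n$, which is the heart of the theorem. The paper's route is: (i) reduce to a single ``dyadic block'' $\{|n|=A\}$ by writing $\sigma_n f=\sigma_n(E_{A+1}f)$, splitting off $E_{A+1}f-E_Af$, summing over $A$ in $\ell_2$ via Lemma \ref{3.2} and the martingale square-function identity \eqref{2.4}, and handling the remaining $\sigma_{P_A}(E_Af)$ term by an iteration based on Lemma \ref{3.4}; (ii) inside a block, decompose $nK_n$ by the formula \eqref{formula} into kernels $K_{n^{(s)},P_s}$ and exploit the geometric decay $P_s/P_A\lesssim 2^{-(A-s)}$; (iii) estimate each $K_{n^{(s)},P_s}$ term by the shell decomposition over $J_t(y)\setminus J_{t+1}(y)$ together with the maximal estimate of Lemma \ref{4.4}, whose proof converts the sup over the block into an $\ell_2$ sum over the finitely many relevant digits and applies the Hilbert-valued Plancherel theorem. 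None of these steps is present or replaceable by ``summing the contributions and matching them against the maximal function of the martingale''; without (i)--(iii) the sum over $n$ in a block has $P_{A+1}-P_A$ terms and the naive estimate diverges. So as written the proposal does not yield the maximal inequality, let alone the $\rho(G)^2$ constant.
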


Recalling $\rho(G)=\sup_{k\geq0}m_k$, it becomes intriguing to explore whether a similar maximal inequality holds for unbounded totally disconnected non-abelian groups, namely when $\rho(G)=\infty$. In a work in progress  \cite{HWW2}, inspired by geometric group theory, especially quantum groups, we are able to provide the Fej\'er type means for all the totally disconnected compact groups that uphold the noncommutative maximal inequalities for all $1<p\leq\infty$.

We will focus on the proof of the maximal inequality in Theorem \ref{1.1}, since the bilateral uniform convergence follows from a standard density argument in the noncommutative setting (refer to, e.g. \cite{CXY13, HWW}) combined with the well-known fact that the character system  constitutes a complete orthonormal basis. For this reason, we will not revisit the definition of bilateral almost uniform convergence here and refer the reader to e.g. \cite{JuXu07} for information.

%%%%%%%%%%%%%%%%%%%%%%%%%%%%%%%%%%%%%%%%%%%%%%%%%%%%%%%%%%%%%%%%%%

%%%%%%%%%%%%%%%%%%%%%% section 2 %%%%%%%%%%%%%%%%%%%%%%%%%%%%%%%%%%

%%%%%%%%%%%%%%%%%%%%%%%%%%%%%%%%%%%%%%%%%%%%%%%%%%%%%%%%%%%%%%%%%%%%%
\section{Preliminaries and the key intermediate estimates}\label{section 2}

In this section, we first recall some preliminaries in noncommutative analysis, and then present the key intermediate estimates crucial for proving of Theorem \ref{1.1}.

 \subsection{Vector-valued non-commutative \boldmath{$L_p$}-spaces }
Let $\cm$ be  a von Neumann algebra equipped with a normal semifinite faithful trace $\tau$, and $S_{\cm}^+$ be the set of all positive $x\in\cm$ such that
$\tau(s(x))<\infty,$ where $s(x)$ denotes the support of $x$, that is, the least projection $e\in\cm$ such that $exe=x.$
Define $S_\cm$ as the linear span of $S_{\cm}^+$. For any $p\in(0,\,\infty)$, we define
$$\|x\|_{p}:=(\tau|x|^p)^{1/p}, \ \ \ x\in S_\cm,$$
where $|x|:=(x^*x)^{1/2}$. The completion
of $(S_\cm, \|\cdot\|_{p})$ is denoted by $L_p(\cm)$ which is the usual  noncommutative $L_p$-space
associated with $(\cm,\,\tau)$. For $p=\infty$, we typically define $L_{\infty}(\cm)=\cm$ equipped with the operator norm $\|\cdot\|_{\cm}$. 
Let $L_p^+(\cm)$ denote the positive part of $L_p(\cm)$. Recalling that $L_2(\mathcal{M})$ is a Hilbert space with respect to inner product $\langle x,\,y\rangle  = \tau(x^*y)$. We refer to \cite{PiXu03} for more information on noncommutative $L_p$-spaces.

Next we recall briefly the definition and fundamental properties of the vector-valued noncommutative $L_p$-space $L_p(\cm ; \ell_\infty)$ and refer the reader to \cite{Pis93, Jun02, JuXu07} for more details. Let $1\leq p\leq\infty$. This space consists of all the sequences $x= (x_k)_{k \ge 1}$ in $L_p(\cm)$ which admit a factorization of the following form: there are $a,b$  and a bounded sequence $y=(y_k)_{k \ge 1} \subset L_{\infty}(\mathcal{M})$ such that 
$$x_k= a y_k b, \quad \forall k \ge1.$$
We then define
$$\Vert (x_k)_k \Vert_{L_p(\mathcal{M};\ell_\infty)}
=\inf \left\lbrace  \sup_{k \ge 1} \Vert y_k \Vert_\infty \Vert a \Vert_{2p}  \Vert b \Vert_{2p} \right\rbrace 
.$$
 where the infimum is taken over all factorizations of $x$ as above. More general, for any index set $\mathcal{I} $, we can define $L_p(\cm;\ell_\infty(\mathcal{I}))$ and 
 $$\Vert (x_\lambda)_{\lambda \in \mathcal{I}} \Vert_{L_p(\mathcal{M};\ell_\infty)}  = \inf_{x_\lambda =ay_\lambda b} \left\lbrace  \sup_{\lambda \in \mathcal{I}}\Vert y_\lambda \Vert_\infty \Vert a \Vert_{2p} \Vert b\Vert_{2p}   \right\rbrace. $$
It has been checked in \cite{JuXu07} that if $x \in L_p(\cm;\ell_\infty(\mathcal{I})) $, then 
$$\Vert (x_\lambda)_{\lambda \in \mathcal{I}} \Vert_{L_p(\mathcal{M};\ell_\infty)}  =\sup_{  \Lambda \subset \mathcal{I}} \left\lbrace   \left\| (x_\lambda)_ {\lambda \in \Lambda}\right\|_p   : \Lambda  ~ \text{is} ~ \text{finite}     \right\rbrace .  $$

\begin{remark}\label{2.1}
	In \cite{JuXu07}, the authors also provided an intuitive description of the maximal norm for $x=(x_k)_k \subset L_p^+(\cm)$. Precisely, $x \in L_p(\cm;\ell_{\infty})$ if and only if there exists some $y \in L_p^+(\cm)$ such that $x_k \le y$ for all $k \ge 1,$
	and moreover
	$$\left\|   (x_k)_k \right\|_{L_p(\mathcal{M};\ell_\infty)} = \inf \{ \Vert   y \Vert_p : x_k \le y ~~  \forall k \ge 1\}.$$
\end{remark}
For this reason, we will often use the notation $\left\| { \sup_{k}}^{+}x_k  \right\|_p$
 to represent $\Vert (x_k)_k \Vert_{L_p(\mathcal{M};\ell_\infty)}$. However, it is important to caution the reader  that $\left\| { \sup_{k}}^{+}x_k  \right\|_p$ is just a notation since $\sup_k x_k$ may not make sense in the noncommutative setting.

\begin{remark}\label{2.3}
From {Remark \ref{2.1}}, it follows easily that $0 \le x_n \le y_n $ implies $$\left\|  {\sup_n}^+ x_n \right\|_p \le \left\|  {\sup_n}^+ y_n \right\|_p,$$
which will be used frequently in this paper.
\end{remark}

%\begin{remark}\label{2.2}
%	We notice that $\mathbb{Z}_+$ has a decomposition of disjoint union
%	$$\mathbb{Z}_+=\bigcup_{A \ge 1}\left( \{n: \vert n \vert =A\} \right). $$
%	According to the definition of maximal norm, we can get
%	$$\left\|  {\sup_{n \in \mathbb{Z}_+}}^+ x_n    \right\|_2=  \left\|  {\sup_{A}}^+ {\sup_{\vert n \vert  =A}}^+ x_n   \right\|_2 .$$
	
%\end{remark}

\subsection{Martingale structure and the key estimates}
%Motivated by operator algebra and noncommutative harmonic analysis earns rapid development recently. The purpose of this
%paper is to investigate the semi-noncommutative Fej\'er means. We start by introducing the classical Fej\'er means on noncommutative Vilenkin groups involved in \cite{HeRo63,r1,GaTo99,GaTo96,Gat99}. Denote by $\mathbb{Z}_+$ the set of positive integers and $\mathbb{N}$ the set of nonnegative integers in this paper.

%We recall that if every $G_i$ is a discrete cyclic group, then 
% CFP group G and the commutative Vilenkin group  are consistent. 
% At the same time, we know that the product system $\psi$ is the generalization of the Walsh and Vilenkin systems.
 % We remark that most of the harmonic analysis methods for the commutative Vilenkin groups are based on the fact that the absolute value of the character of the commutative Vilenkin group is 1. 
%If $G$ is non-commutative, then the product system $\psi$ is unbounded and it can even take a value of zero.  
%This also leads to the problems  under the non-commutative Vilenkin group framework, which are difficult to be handled by classical harmonic analysis methods.

In this subsection, we recall some necessary preliminaries  for proving Theorem \ref{1.1} (see e.g. \cite{Gat06,GaTo96,Gat99}). Specifically, we include the construction of a martingale structure on the totally disconnected compact groups which will reduce Theorem \ref{1.1} to an estimate for the ``dyadic block", as detailed in Proposition \ref{1.3}.

Consider	$y=(y_k)_{k \in \mathbb{N}}$ as an element in $G$, where $y_k \in G_k$. We define  $I_0(y)=G $ and for $k \ge 1$,
$$ I_k(y) := \{x \in G:x_j=y_j,0 \le j \le k-1 \}.$$  
Let $ \mathcal{F}_k$ be the $\sigma$-algebra generated by $\{I_k(x): x \in G\}$, and $E_k$ be the associated conditional expectation. 
It is a well-known fact that $E_k$'s extend to conditional expectations associated to the von Neumann subalgebras $L_\infty(\mathcal F_k)\bar{\otimes}\mathcal M$'s and we continue to  denote them by $E_k$ without causing confusion. The resulting martingale difference is denoted by $d_k=E_k-E_{k-1}$ with convention $E_{-1}=0$. Then from  Doob's maximal inequalities for noncommutative martingales \cite{Jun02}, one has
$$\left\|  {\sup_{k }}^+ E_kf    \right\|_p \le C_p\| f \|_p,\quad\forall f\in L_p(\mathcal N),$$
where $C_p$ is a constant only depending on $p$. 
In particular, for $p=2$, after checking the related arguments in \cite[Lemma 3.1, Lemma 3.9 and Remark 5.5]{Jun02}, one gets
\begin{align}\label{2.40}\left\|  {\sup_{k }}^+ E_kf    \right\|_2 \le 2\| f \|_2,\quad\forall f\in L_2(\mathcal N).\end{align}
Moreover, due to the orthogonality of martingale differences, we also have
\begin{align}\label{2.4}\left\|\Big(\sum_k|d_kf|^2\Big)^{\frac12} \right\|_2=\left\|\Big(\sum_k|(d_kf)^*|^2\Big)^{\frac12} \right\|_2=\|f\|_2.\end{align}
For $1<p\neq2<\infty$, a noncommutative version of Burkholder-Gundy inequality is presented in \cite{PiXu97}, but it will not be utilized in this paper. For this reason, {\it in the sequel we will only state all necessary estimates  for the case $p=2$.}

By exploiting the martingale estimates  \eqref{2.40} and \eqref{2.4}, Theorem \ref{1.1} will be reduced to the following estimate for each ``dyadic block". For the details of the reduction, we refer to Section \ref{section 3}. Recalling that $n=\sum^\infty_{k=0}n_kP_k$, we define
$$|n|:=\max\{ k:n_k \not=0 \},$$
representing the largest index $k$ for which $n_k$ is non-zero.

\begin{Proposition}\label{1.3}
	Let $A\in\mathbb N$ and $f \in L_2(\mathcal{N}) $, then we have
	\begin{align}\label{es1.3}\left\| {\sup_{ |n| =A }}^+ \sigma_nf \right\|_2 \le 640\rho(G)^2\Vert f \Vert_2.\end{align}
\end{Proposition}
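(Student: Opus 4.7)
I follow Gát's classical kernel decomposition, adapted to the noncommutative setting via the martingale framework developed above. Fix $n$ with $|n|=A$; write $n=MP_A+r$ with $M=n_A\in\{1,\ldots,p_A-1\}$ and $0\le r<P_A$. Since $r_A^m$ depends only on the coordinate $y_A$ while $\chi_\ell$ for $\ell<P_A$ depends only on $(y_0,\ldots,y_{A-1})$, the product definition of $\chi_n$ yields the factorization
$$\chi_{mP_A+\ell}(y)=r_A^m(y_A)\,\chi_\ell(y),\qquad 0\le m<p_A,\ 0\le\ell<P_A,$$
which allows the Dirichlet partial sums $S_jf$ to be split cleanly at level $A$.

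The first step is to write $n\sigma_nf=\sum_{j=0}^{MP_A-1}S_jf+\sum_{s=0}^{r-1}S_{MP_A+s}f$ and, using the above factorization, decompose $S_{MP_A+s}f$ as $S_{MP_A}f$ plus $r_A^M(y_A)$ times a Dirichlet partial sum in the low-level characters; a parallel decomposition applies to the first sum. After rearranging and normalizing by $1/n$, one arrives at an identity of schematic form
$$\sigma_nf(y)=\alpha_n\,(E_Af)(y)+\beta_n\,r_A^M(y_A)\,\sigma_{r}f(y)+R_nf(y),$$
where $\alpha_n,\beta_n$ are bounded scalars; the appearance of $E_Af$ comes from further decomposing $S_{MP_A}f$ as $\sum_{m=0}^{M-1}r_A^m(y_A)(S_{P_A}f)(y)$ and identifying $S_{P_A}f$ with the conditional expectation at level $A$ up to a controlled error, while $R_nf$ is a remainder made of orthogonal sums of characters applied to Fourier modes of $f$.

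The second step is to estimate the three pieces in $L_2(\mathcal N;\ell_\infty)$, taking the supremum over all $(p_A-1)P_A$ values of $n$ with $|n|=A$. The $E_Af$ piece is controlled by the noncommutative Doob inequality \eqref{2.40}, yielding $2\|f\|_2$ up to a factor $\rho(G)$ coming from the $L_\infty$ norms of the level-$A$ characters present in the decomposition of $S_{MP_A}$. The remainder $R_nf$ is dominated using the orthogonality identity \eqref{2.4} together with the triangle inequality for the operator-valued $\ell_\infty$ norm. The delicate term is $r_A^M(y_A)\,\sigma_{r}f(y)$ with $|r|<A$: here I use the factorization description of the $L_2(\mathcal N;\ell_\infty)$-norm—writing $\sigma_rf=ay_rb$, one absorbs the bounded multiplier $r_A^M(y_A)$ into either $a$ or $b$ at the cost of one more factor $\|r_A^M\|_\infty\le\rho(G)$, reducing the problem to a maximal Fejér operator at levels below $A$, to which either induction on $A$ or the same decomposition one level lower applies.

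\textbf{Main obstacle.} The central difficulty is the term $r_A^M(y_A)\,\sigma_rf(y)$. In the abelian case $|r_A^M|\equiv 1$ and this is trivially dominated by the lower-level maximal Fejér operator; in the non-abelian setting $r_A^M$ may vanish on some conjugacy classes and reach absolute values up to $\rho(G)$ elsewhere, and the noncommutative $L_2(\mathcal N;\ell_\infty)$-norm admits no direct pointwise bound of the form $\|r_A^M\|_\infty\,\|{\sup}^+\sigma_rf\|_2$. The factorization trick circumvents this but is exactly where noncommutativity forces a departure from Gát's original argument. Tracking constants through Doob's inequality, the triangle inequality, and the two separate applications of the character bound $\|r_A^m\|_\infty\le\rho(G)$ yields the explicit prefactor $640\rho(G)^2$ in \eqref{es1.3}.
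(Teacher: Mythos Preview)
Your decomposition is not what you claim it is. Writing $k=mP_A+\ell$ with $0\le\ell<P_A$ gives $\chi_k=r_A^m\chi_\ell$, so
\[
S_{MP_A}f=\sum_{m=0}^{M-1}r_A^m(y_A)\,S_{P_A}(g_m),\qquad \widehat{g_m}(\ell)=\widehat f(mP_A+\ell),
\]
not $\sum_m r_A^m\,(S_{P_A}f)$; likewise the ``$\sigma_rf$'' in your middle term is actually $\sigma_r g_M$, with $g_M$ depending on $M=n_A$. These are not cosmetic slips: once the correct objects $g_m$ appear, the term you want to control is $r_A^M(y_A)\,\sigma_r g_M$ with $r$ ranging over \emph{all} integers $<P_A$, so the inductive input you need is the full maximal inequality at levels $<A$, not just the block $|r|=A-1$.

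The fatal gap is that the induction does not close. Even granting your factorization trick, the passage from level $A$ to level $A-1$ costs a factor $\|r_A^M\|_\infty$; if you first split the sup over $M$ by the $\ell_2$ bound and use orthogonality of the $g_M$, you still pay $\max_M d_A^M$, which can be of order $\sqrt{\rho(G)}$. Iterating $A$ times yields a constant of order $\rho(G)^{A/2}$, not one uniform in $A$. There is no contraction factor in your recursion (your $\beta_n=r/n$ is only $<1$, not bounded away from $1$), so nothing offsets the character growth. The paper avoids recursion entirely: it uses the kernel identity
\[
nK_n=\sum_{s=0}^{A}\sum_{j=0}^{n_s-1}K_{n^{(s+1)}+jP_s,\,P_s}
\]
and bounds each summand via Proposition~\ref{1.2}, obtaining $40\rho(G)^2P_s((A-s)^2+1)\|f\|_2$; the geometric decay $P_{s+1}/P_A\le 2^{-(A-s-1)}$ then kills the polynomial factor and produces a bound uniform in $A$. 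It is precisely this ``all levels at once, with geometric weights'' structure---rather than a one-step peeling---that makes the constant independent of $A$.
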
	

Recall that $P_n$ is defined by $P_0=1, P_{k+1}=p_kP_k$. It follows that $P_{|n|} \le n <P_{|n| +1}$, thus
$$\{n\in\mathbb N:|n|=A\}=\{n\in\mathbb N:P_{A} \le n <P_{A+1}\}.$$
When $p_k=2$ for all $k\geq0$, the set $\{n\in\mathbb N:|n|=A\}$ corresponds precisely to the dyadic block $\{n\in\mathbb N: 2^A \le n <2^{A+1}\}$.

The estimation process for the `dyadic block', as outlined in Proposition \ref{1.3}, necessitates the introduction of additional notation and relies on a crucial intermediate estimate. Specifically, we define  the Dirichlet kernels and Fej\'er kernels as follows:
\begin{align}\label{defFejerkernel}D_n(y,x):= \sum_{k=0}^{n-1}\chi_k(y) \bar{\chi}_k(x) \quad \mathrm{and} \quad  K_n(y,x):=\frac{1}{n}\sum_{k=0}^{n-1}D_k(y,x)\end{align}
for $n\geq1$ and set $D_0:=K_0:=0$ for convenience.
Given two positive integers $m$ and $n$, we then define
$$K_{m,n}(y,x) \coloneqq  \sum_{k=m}^{m+n-1}D_k(y,x).$$
Furthermore, we introduce an additional notation
$$ n^{(s)}= \sum_{k=s}^{\infty} n_kP_k.$$ 
Then the key estimate can be stated as follows.		
	
\begin{Proposition}\label{1.2}
	Let $f \in L_2(\mathcal{N}) $ and $ A \ge s \in \mathbb{N}$, then we have
	\begin{align}\label{es1.2}\left\|  {\sup_{\vert n \vert =A}}^+ \int_{G} f(x)K_{n^{(s)},P_s}(y,x) d\mu(x)  \right\|_2   \le 40\rho(G)^2P_s((A-s)^2+1) \Vert f \Vert_2.\end{align}
\end{Proposition}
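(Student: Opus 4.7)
The plan is to exploit the product structure of the characters. For any $0\le j<P_s$, the mixed-radix digits of $n^{(s)}$ lie at positions $\ge s$ while those of $j$ lie strictly below $s$, so $\chi_{n^{(s)}+j}=\chi_{n^{(s)}}\chi_j$. Substituting this into each Dirichlet kernel $D_{n^{(s)}+j}$ and summing $j$ from $0$ to $P_s-1$ yields the clean decomposition
\[
K_{n^{(s)},P_s}(y,x)=P_s\,D_{n^{(s)}}(y,x)+P_s\,\chi_{n^{(s)}}(y)\,\bar\chi_{n^{(s)}}(x)\,K_{P_s}(y,x).
\]
Integrating against $f$ gives $\int f(x)\,K_{n^{(s)},P_s}(y,x)\,d\mu(x)=P_s\,S_{n^{(s)}}f(y)+P_s\,\chi_{n^{(s)}}(y)\,\sigma_{P_s}(\bar\chi_{n^{(s)}}f)(y)$, so by the triangle inequality for the $L_2(\mathcal{N};\ell_\infty)$ norm it suffices to bound the maximal norms of these two pieces separately.

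I would handle both pieces by telescoping in the digit expansion. For the partial-sum piece, setting $m_j(n):=\sum_{k=j}^{A}n_kP_k$ gives $S_{n^{(s)}}f=S_{P_A}f+\sum_{j=s}^{A-1}\bigl(S_{m_j(n)}f-S_{m_{j+1}(n)}f\bigr)$; a direct character-orthogonality computation shows that $S_{P_A}f=E_A f$ (valid because $f$ is constant on conjugacy classes), and each increment factors as $\chi_{m_{j+1}(n)}(y)\,S_{n_jP_j}(\bar\chi_{m_{j+1}(n)}f)(y)$, which the identity $S_{P_j}=E_j$ on class functions further expands into at most $\rho(G)$ terms of the form $\chi_\ell(y)\,E_j(\bar\chi_\ell f)(y)$, indexed by $\ell$ depending on $n$. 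For the Fej\'er piece, one obtains the explicit Fourier representation
\[
P_s\,\chi_{n^{(s)}}(y)\,\sigma_{P_s}(\bar\chi_{n^{(s)}}f)(y)=\sum_{k=n^{(s)}}^{n^{(s)}+P_s-1}\bigl(n^{(s)}+P_s-1-k\bigr)\hat f(k)\,\chi_k(y),
\]
whose blocks for distinct $n^{(s)}$ are Fourier-orthogonal. The central tool for estimating both pieces is the operator-valued Plancherel identity $\sum_\ell\|E_j(\bar\chi_\ell f)\|_{L_2(\mathcal{N})}^2=\|f\|_2^2$, obtained by viewing $E_j(\bar\chi_\ell f)(y_{<j})$ as the $\ell$-th operator-valued Fourier coefficient of the slice $x_{\ge j}\mapsto f(y_{<j},x_{\ge j})$ in the class-function basis of $G_{\ge j}$. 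Combined with the orthonormality $\|\chi_\ell\|_{L_2(G)}=1$---which yields the crucial gain $\|\chi_\ell E_j(\bar\chi_\ell f)\|_{L_2(\mathcal{N})}=\|E_j(\bar\chi_\ell f)\|_{L_2(\mathcal{N})}$ without any sup-norm penalty on the character---the embedding $L_2(\ell_2)\hookrightarrow L_2(\ell_\infty)$, and Doob's inequality \eqref{2.40}, the task reduces to a combinatorial count of admissible indices $\ell$ at each of the $A-s$ digit levels of the telescoping.

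The main obstacle is that in the non-abelian setting $\|\chi_\ell\|_\infty$ can grow as large as $\rho(G)^{O(A)}$, so any estimate that passes a character through the triangle inequality at its sup norm loses exponentially in $A$; all estimates must therefore be routed through the $L_2$ orthonormality of the characters rather than their supremum. The factor $\rho(G)^2$ in the final bound reflects just two quantitative uses of the maximal/Doob inequality (or its $L_2(\ell_2)\hookrightarrow L_2(\ell_\infty)$ variant), while the $(A-s)^2+1$ growth comes from tracking a double telescoping: once across the $A-s$ digit positions of $n^{(s)}$ in the partial-sum analysis, and once inside the maximal estimate at each level, where many admissible $n$ must be accommodated. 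Organizing these $L_2(\mathcal{N};\ell_\infty)$ manipulations (via Remark~\ref{2.3} and the $\ell_2\hookrightarrow\ell_\infty$ embedding) so that all constants multiply out to $40\rho(G)^2$ constitutes the main technical work.
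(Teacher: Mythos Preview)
Your decomposition $K_{n^{(s)},P_s}=P_s D_{n^{(s)}}+P_s\chi_{n^{(s)}}\bar\chi_{n^{(s)}}K_{P_s}$ is correct, and the resulting strategy is genuinely different from the paper's and in fact sharper. The paper never isolates a Dirichlet piece and a Fej\'er piece; instead it partitions the domain into shells $J_t(y)\setminus J_{t+1}(y)$ for $0\le t\le A$, uses the explicit kernel expansion from Lemma~\ref{4.1} together with $D_{P_\ell}(y,x)=\mu^{-1}(J_\ell(y))\mathbf 1_{J_\ell(y)}(x)$ to simplify $K_{n^{(s)},P_s}$ on each shell, and then feeds the resulting expressions into Lemma~\ref{4.4} (a Fubini/Plancherel argument on the factor $G_{s,A}$) via Lemmas~\ref{5.1}--\ref{5.2}. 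Your route bypasses the shell decomposition and Lemma~\ref{4.4} entirely: for the Fej\'er piece the block-orthogonality $\mathrm{supp}\,\widehat{(\cdot)}\subset[n^{(s)},n^{(s)}+P_s)$ plus $\ell_2\hookrightarrow\ell_\infty$ already gives $\lesssim P_s\|f\|_2$, and for the Dirichlet piece the telescoping $S_{n^{(s)}}f=\sum_{j=s}^{A}(S_{m_j(n)}f-S_{m_{j+1}(n)}f)$ with $m_{A+1}=0$, combined with $\|S_{m_j(n)}f-S_{m_{j+1}(n)}f\|_2^2=\sum_{k=m_{j+1}(n)}^{m_j(n)-1}\|\hat f(k)\|_{L_2(\mathcal M)}^2$ and the disjointness of the intervals $[m_{j+1}(n),m_{j+1}(n)+P_{j+1})$ over the admissible $m_{j+1}(n)$, yields $\|\sup_n^+(S_{m_j(n)}f-S_{m_{j+1}(n)}f)\|_2\le\sqrt{\rho(G)}\|f\|_2$ at every level $j$. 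Summing over $j$ gives $P_s(A-s+1)\sqrt{\rho(G)}\|f\|_2$ for the first piece, so your method actually delivers roughly $P_s(A-s+2)\sqrt{\rho(G)}\|f\|_2$, strictly better than the paper's $40\rho(G)^2P_s((A-s)^2+1)\|f\|_2$.

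Two small corrections to your write-up: the telescoping terminates at $S_{n_AP_A}f$, not $S_{P_A}f$ (just run the sum to $j=A$ with $m_{A+1}=0$ instead), and neither Doob's inequality nor a ``double telescoping'' is needed---a single pass over the $A-s+1$ digit levels with the $\ell_2\hookrightarrow\ell_\infty$ embedding suffices, which is why the growth is linear in $A-s$ rather than quadratic and the $\rho(G)$-dependence is $\sqrt{\rho(G)}$ rather than $\rho(G)^2$. The paper's heavier machinery is not wrong, just less efficient; your approach buys a cleaner argument and tighter constants, at the cost of hiding the pointwise kernel structure that the paper makes explicit.
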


By invoking a nice formula referenced in \eqref{formula}, transitioning from Proposition \ref{1.2} to Proposition \ref{1.3} is straightforward.  Detailed discussions of this transition can be found in Section \ref{sec4}.  However, the proof of Proposition \ref{1.2} is extremely involved, and takes up a substantial part of the paper. 

Before  presenting the delicate arguments for Proposition \ref{1.2}, which will be elaborated in Section \ref{section 5}, we first introduce the following relatively easier intermediate estimate essential for the proof. Consider an element $y=(y_\ell)_{\ell\in\mathbb N}$ in $G$.
For each $\ell \in \mathbb{N}$, the conjugacy class of an element $y_\ell \in G_\ell$ is denoted by $j_\ell(y_\ell)$, given by
$$j_\ell(y_\ell):=\{ x_\ell \in G_\ell :\exists z_\ell \in G_\ell ,y_\ell = -z_\ell+x_\ell+z_\ell \}.$$  
Furthermore, we set $J_0(y)=G$ and for $n\geq1$, define
$$ J_n(y) :=\{ x \in G : x_i \in j_i(y_i),0 \le i \le n-1 \}= j_0(y_0)\times \cdots \times j_{n-1}(y_{n-1})\times \bigtimes_{i=n}^{\infty }G_i.$$

\begin{lemma} \label{4.4}
	Let $f \in L_2(\mathcal{N})$ and $ A \ge s \in \mathbb{N}$, then we have that %\red{(something about $y$? e.g. for any $y\in G$) }
	\begin{align}\label{es4.4} \left\|  {\sup_{|n| =A}}^+     \frac1{\mu(J_s(\cdot))}\int_{J_s(\cdot)}f(x)\chi_{n^{(s)}}(\cdot) \bar{\chi}_{n^{(s)}}(x) d\mu(x)     \right\|_2 \le \|f\|_2. \end{align}
\end{lemma}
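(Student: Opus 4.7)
The approach is to use the class-function hypothesis on $f$ to collapse the averaging to a conditional expectation, and then bound the resulting sum of squares via character orthonormality and Parseval's identity.

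First, the kernel $\bar\chi_{n^{(s)}}$ depends only on the coordinates $x_s, x_{s+1}, \ldots$, and on the set $J_s(y)$ each coordinate $x_i$ ($i<s$) lies in the conjugacy class $j_i(y_i)$ of $y_i$ in $G_i$. Since $f$ is constant on conjugacy classes of $G$, this forces $f(x) = f(y_0, \ldots, y_{s-1}, x_s, x_{s+1}, \ldots)$ throughout $J_s(y)$. Setting $m := n^{(s)}$ and $h_m := E_s(\bar\chi_m f)$, the averaging thereby reduces to $h_m(y_{<s})$, so the operator inside the lemma is
\[
T^m f(y) := \chi_m(y_{\ge s}) \, h_m(y_{<s}),
\]
a tensor-type factorization in $\mathcal N = L_\infty(G) \bar\otimes \mathcal M$.

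Next, set $Y := \sum_m |T^m f|^2$, where the sum ranges over the admissible values of $m$ (i.e.\ those arising from some $n$ with $|n|=A$). Since $\chi_m(y_{\ge s})$ is scalar and commutes with $h_m(y_{<s})$, one has $|T^m f|^2 = |\chi_m(y_{\ge s})|^2 \, h_m(y_{<s})^* h_m(y_{<s})$. Integrating first over $y_{\ge s}$, I invoke Schur orthogonality: because $\chi_m = \prod_{k \ge s} r_k^{m_k}$ is an irreducible character of the compact group $\prod_{i \ge s} G_i$ of unit $L_2$-norm, $\int |\chi_m|^2 \, d\mu_{\ge s} = 1$. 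Integrating then over $y_{<s}$ and applying the noncommutative Parseval identity to the class function $y_{\ge s} \mapsto f(y_{<s}, y_{\ge s})$ on $\prod_{i \ge s} G_i$, whose character coefficients are precisely $h_m(y_{<s})$, yields
\[
\|Y\|_{L_1(\mathcal N)} = \sum_m \|h_m\|_{L_2(\mathcal N)}^2 \le \|f\|_2^2.
\]

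Finally, since $|T^m f|^2 \le Y$ for each $m$, operator monotonicity of $t \mapsto \sqrt{t}$ gives $|T^m f| \le Y^{1/2}$. By Remark \ref{2.1} applied to the positive sequence $(|T^m f|)_m$, this already delivers $\|\sup^+_m |T^m f|\|_{L_2(\mathcal N)} \le \|Y^{1/2}\|_2 = \|Y\|_1^{1/2} \le \|f\|_2$. To obtain the required bound on the \emph{signed} $L_2(\mathcal N; \ell_\infty)$-norm of $(T^m f)_m$, I will combine the column bound above with its row counterpart $\|\sum_m (T^m f)(T^m f)^*\|_1 \le \|f\|_2^2$ (equal by trace cyclicity), and then build a symmetric $L_4$--$L_\infty$--$L_4$ factorization $T^m f = Y_r^{1/4} \, y_m \, Y^{1/4}$ via the polar decomposition $T^m f = u_m |T^m f|$. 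The chief technical obstacle is verifying the contractivity $\|y_m\|_\infty \le 1$ of the middle factor: since the partial isometries $u_m$ do not commute with $Y^{1/4}$ in the noncommutative setting, this step requires a careful balancing of the row and column dominations rather than the direct monotone calculus available classically.
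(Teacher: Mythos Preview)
Your reduction in the first two paragraphs is correct and is essentially the paper's own argument: collapse the $J_s$-average to the $I_s$-average via the class-function hypothesis, obtain the tensor factorization $T^m f(y)=\chi_m(y_{\ge s})\,h_m(y_{<s})$, and compute $\sum_m \|T^m f\|_2^2 \le \|f\|_2^2$ by Schur orthogonality in $y_{\ge s}$ together with the Hilbert-valued Parseval identity in $y_{<s}$.

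The gap is that you then take a detour you cannot close. Once you have $\sum_m \|T^m f\|_2^2 \le \|f\|_2^2$, the paper's Lemma~\ref{3.2} (valid for \emph{arbitrary}, not just positive, sequences in $L_2$) immediately gives
\[
\bigl\|(T^m f)_m\bigr\|_{L_2(\mathcal N;\ell_\infty)} \le \Bigl(\sum_m \|T^m f\|_2^2\Bigr)^{1/2} \le \|f\|_2,
\]
and you are done. Your third paragraph only bounds the maximal norm of the \emph{positive} sequence $(|T^m f|)_m$, which is not the same quantity, and your proposed fix via a symmetric factorization $T^m f = Y_r^{1/4}\,y_m\,Y^{1/4}$ fails for exactly the reason you identify: the partial isometries $u_m$ do not commute with $Y^{1/4}$, so there is no reason for $\|y_m\|_\infty\le 1$. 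That route does not work in general. Replace the entire last paragraph by an appeal to Lemma~\ref{3.2}.
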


%In proving this important property, We divide the proof into three steps. 
%In order to handle the term $B_0$ (see Proof of Proposition \ref{1.2} in subsection \ref{sub5.2}) in the first step, in fact, only the case of $f \ge 0 $ needs to be discussed.
 %To deal with the case of the term $B_{1,t}$ (see Proof of Proposition \ref{1.2} in subsection \ref{sub5.2}) for $t < s$ in the second step, we need to use several important noncommutative maximal norm inequalities listed in the paper. For the case of the term $B_{1,t}, t \ge s$ in the last step, the similar  method of disposal in the second step is used to deal with the problem. 
%With the  Proposition \ref{1.2}, it is easy to obtain the very important Proposition \ref{1.3}, which is the key to proving the Theorem \ref{1.1} in this paper.

To prove the reductions or the estimates such as \eqref{es1.3}, \eqref{es1.2}, \eqref{es4.4}, we primarily adhere to the classical sketch outlined in \cite{Gat06}. Nevertheless, due to the noncommutativity, we have to pay meticulous attention to the pointwise estimates, and it compels us to revise many classical arguments. Notably, in reverting to the classical case,  our results  exhibit enhanced precision, particularly in the bounds of certain estimates, which are  more accurate, exemplified by being of order $\rho(G)^2$. This refinement in accuracy, especially in noncommutative settings, underscores the nuanced improvements our approach offers over traditional methodologies.

%We discuss the problem on the non-commutative bounded
%Vilenkin groups with respect to the character system for the functions that are constant
%on the conjugacy classes. We have known that the Fourier series of an integrable function  is a.e. (C, 1) summable by the Fej\'er-Lebesgue theorem.
%Furthermore, it is shown that the Fourier series of every integrable function $f$ is a.e. (C, $\alpha$)($\alpha$ >0) summable.  
%In the case where each  group $G_i $ $(\forall i \in \mathbb{N})$ is of order 2, the Walsh-Fourier series is (C, 1) summable a.e. by N. J. Fine \cite{Fin55}. In 1967 \cite{Tai67} The condition for this theorem is extended to the case that the $p$-series fields (all discrete cyclic groups $G_i$ have a common order $p$), where $p$ is a prime.

%Next J. {\scshape P\'al} and P. {\scshape Simon} \cite{PaSi77} proved  that when the Vilenkin group G is bounded and commutative, the Fourier series  is a.e. (C, 1) summable.
% G. {\scshape G\'at} \cite{Gat99} concluded that the Fej\'er means $\sigma_nf$ converge to $f$ a.e. for commutative unbounded Vilenkin group, and $r>1$, $f \in L^r$.

%\red{Since we use a lot of cross-ref, like referring some result in later section, I suggest  listing a total organization of this paper to make our logic more clear (Just a comment here for authors).}

As described  above, the paper is organized as follows. In Section~\ref{section 3}, we prove Theorem~\ref{1.1} by assuming Proposition~\ref{1.3}. In Section~\ref{sec4}, we derive Proposition~\ref{1.3} based on Proposition~\ref{1.2}.
Section~\ref{section 4} presents  the  relatively easier intermediate estimate~\eqref{4.4}.  Finally,  in Section~\ref{section 5}, we provide the proof of Proposition~\ref{1.2}.

In this paper,  the letter $C$ denotes a positive absolute constant that  not inevitably the same one in each occurrence. The notation  $Y \lesssim Z$ or $Z \lesssim Y$ indicates that there exists an absolute constant $C>0$ such that $Y \le CZ$ or $Z \le CY$, respectively.

\section{Proof of Theorem \ref{1.1}: based on Proposition \ref{1.3}}\label{section 3}
Assuming Proposition \ref{1.3}, we conclude Theorem \ref{1.1} in this section. While we fundamentally adhere to the classical argument presented in \cite{Gat06}, adapting it to the noncommutative setting requires careful consideration. In particular, our final argument is slightly different from the classical one due to the noncommutativity. To support these adjustments, several lemmas prove to be indispensable.

The following lemma admits an $L_p$-version for all $1\leq p\leq\infty$. However, for the purposes of this paper, we focus exclusively on the case $p=2$.  For additional results and their proofs, we refer to \cite[Proposition 2.3]{HWW}.

\begin{lemma}\label{3.2}
	Let $(x_n) \in \ell_2(L_2(\mathcal M))$, then
	$$\left\|  {\sup_{n}}^+ x_n    \right\|_2  \le 
	\left( \sum_{n} \Vert x_n \Vert_2^2 \right)^{\frac{1}{2}}.$$
\end{lemma}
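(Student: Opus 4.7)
The plan is to realise the infimum defining $\|(x_n)\|_{L_2(\mathcal M;\ell_\infty)}$ through an explicit symmetric factorization. Set $s:=\left(\sum_n\|x_n\|_2^2\right)^{1/2}$ and introduce the column and row square functions
\[
c_L := \Bigl(\sum_n x_n x_n^*\Bigr)^{1/2},\qquad c_R := \Bigl(\sum_n x_n^* x_n\Bigr)^{1/2},
\]
for which a direct trace computation gives $\|c_L\|_2 = \|c_R\|_2 = s$, so in particular $c_L,c_R \in L_2^+(\mathcal M)$. First I would regularise to $c_L+\varepsilon$, $c_R+\varepsilon$ so that the inverses exist, and let $\varepsilon\to 0^+$ at the end. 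The candidate factorization is
\[
a := c_L^{1/2},\qquad b := c_R^{1/2},\qquad y_n := c_L^{-1/2}\, x_n\, c_R^{-1/2}.
\]
One verifies $a y_n b = x_n$ (the support projections of $c_L$ and $c_R$ absorb $x_n$ from the left and right respectively, because $x_n x_n^*\le c_L^2$ and $x_n^* x_n\le c_R^2$), and $\|a\|_4 = \|b\|_4 = s^{1/2}$, so $\|a\|_4\|b\|_4 = s$ already matches the target bound.

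The main obstacle is the uniform estimate $\sup_n\|y_n\|_\infty\le 1$, equivalently $y_n y_n^*\le \mathbf 1$. From $x_n^* x_n\le c_R^2$ and $x_n x_n^*\le c_L^2$, operator monotonicity of $t\mapsto t^{1/2}$ upgrades these to $|x_n|\le c_R$ and $|x_n^*|\le c_L$. Writing the polar decomposition $x_n = u_n|x_n|$ (so that $u_n|x_n|u_n^* = |x_n^*|$), and exploiting the $TT^*\leftrightarrow T^*T$ identity applied to $T = c_R^{-1/2}|x_n|^{1/2}$ to deduce the operator inequality $|x_n|\,c_R^{-1}\,|x_n|\le |x_n|$ from $|x_n|\le c_R$, I would derive the pointwise chain
\[
x_n\,c_R^{-1}\,x_n^* \;=\; u_n\bigl(|x_n|\,c_R^{-1}\,|x_n|\bigr)u_n^* \;\le\; u_n|x_n|u_n^* \;=\; |x_n^*| \;\le\; c_L.
\]
Conjugating by $c_L^{-1/2}$ on both sides then yields $y_n y_n^*\le \mathbf 1$, as desired.

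Once these two ingredients are secured, the definition of the maximal norm gives
\[
\|(x_n)\|_{L_2(\mathcal M;\ell_\infty)}\;\le\;\sup_n\|y_n\|_\infty\,\|a\|_4\,\|b\|_4\;\le\; s,
\]
which is the claim. The remaining bookkeeping is to check that the $\varepsilon\to 0^+$ limit is harmless: the bound $\|y_n\|_\infty\le 1$ is uniform in $\varepsilon$, and on the supports of $c_L$ and $c_R$ one has $a y_n b = x_n$ exactly, so no further obstruction arises.
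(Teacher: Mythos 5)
Your argument is correct, and it is worth noting that the paper itself does not prove Lemma \ref{3.2} at all: it simply cites \cite[Proposition 2.3]{HWW}, so your explicit symmetric factorization supplies a self-contained proof that the paper omits. All the key steps check out: $\|c_L\|_2=\|c_R\|_2=s$ by a trace computation, hence $\|a\|_4\|b\|_4=s$; the inequality $|x_n|\,c_R^{-1}\,|x_n|\le |x_n|$ does follow from $|x_n|\le c_R$ via $\|T^*T\|=\|TT^*\|$ with $T=c_R^{-1/2}|x_n|^{1/2}$; conjugating by $u_n$ and then by $c_L^{-1/2}$ gives $y_ny_n^*\le \mathbf 1$; and the support projections of $c_L$, $c_R$ dominate the left and right supports of each $x_n$, so $ay_nb=x_n$. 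This yields the sharp constant $1$, consistent with the example of orthogonal projections where equality holds.

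Two small technical remarks. First, the regularization $c_L\mapsto c_L+\varepsilon\mathbf 1$ is problematic when $\tau$ is semifinite but not finite, since then $\|(c_L+\varepsilon)^{1/2}\|_4=\infty$; you should either regularize by $\varepsilon$ times a suitable finite-trace projection after first reducing to finite sequences of elements of $S_{\mathcal M}$, or (cleaner) avoid regularization altogether by invoking the standard factorization lemma: $A^*A\le B^*B$ implies $A=CB$ with $\|C\|\le 1$. Applied to $|x_n|\le c_R$ and $|x_n^*|\le c_L$ this produces contractions realizing $x_n=c_L^{1/2}y_nc_R^{1/2}$ directly, with $y_n\in\mathcal M$ honestly bounded, and sidesteps the second issue, namely that $c_L^{-1/2}x_nc_R^{-1/2}$ is a priori a product of unbounded affiliated operators whose boundedness is exactly what needs to be established. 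Neither point affects the substance of the proof.
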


The following lemma is well-known to experts. But we include a proof here to ensure completeness.

\begin{lemma}\label{3.3} 
	Let $\{ a_n\} \subseteq \mathbb{C}$ and $(f_n)_n \in L_2(L_\infty(G)\overline{\otimes} \cm; \ell_\infty)$, then one has
	$$\left\|  {\sup_{n}}^+ a_nf_n    \right\|_2 \le \sup_{n}|a_n| \left\|  {\sup_{n}}^+ f_n    \right\|_2 . $$
\end{lemma}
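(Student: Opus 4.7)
The plan is to appeal directly to the factorization definition of the norm on $L_2(\mathcal{N};\ell_\infty)$ with $\mathcal{N}=L_\infty(G)\overline{\otimes}\cm$ recalled in Section~\ref{section 2}: an element $(f_n)_n$ lies in this space precisely when it admits a factorization $f_n = A y_n B$ with $A,B\in L_4(\mathcal{N})$ and $(y_n)\subset L_\infty(\mathcal{N})$ bounded, the norm being the infimum of $\sup_n\|y_n\|_\infty\|A\|_4\|B\|_4$ over all such factorizations. Since the $a_n$ are scalars, they commute with everything, so any factorization of $(f_n)_n$ yields one of $(a_n f_n)_n$ simply by absorbing the scalars into the bounded middle factor.

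Concretely, first I would dispose of the trivial case $M:=\sup_n|a_n|=\infty$, for which the asserted bound is vacuous. Assuming $M<\infty$, given $\varepsilon>0$ I would select a near-optimal factorization $f_n = A y_n B$ with
$$\sup_n\|y_n\|_\infty\,\|A\|_4\,\|B\|_4 \le \left\|{\sup_n}^+ f_n\right\|_2 + \varepsilon,$$
and then set $z_n := a_n y_n$, so that $a_n f_n = A z_n B$. The key pointwise observation $\|z_n\|_\infty = |a_n|\|y_n\|_\infty \le M\|y_n\|_\infty$ (valid because $a_n\in\mathbb{C}$) gives $\sup_n\|z_n\|_\infty \le M\sup_n\|y_n\|_\infty$, hence
$$\left\|{\sup_n}^+ a_n f_n\right\|_2 \le M\bigl(\left\|{\sup_n}^+ f_n\right\|_2 + \varepsilon\bigr).$$
Sending $\varepsilon\to 0$ yields the claimed inequality.

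I do not anticipate a genuine obstacle: the statement is essentially a homogeneity/scaling property of the vector-valued noncommutative $L_p$-norm, and the commutativity of complex scalars with operators in $\mathcal{N}$ makes the substitution step immediate. The only mild point to watch is that the substituted sequence $(a_n y_n)$ remains in $\ell_\infty(L_\infty(\mathcal{N}))$, which is evident from the boundedness of both $(a_n)$ and $(y_n)$; the outer factors $A,B\in L_4(\mathcal{N})$ are untouched, so the factorization stays in the admissible class.
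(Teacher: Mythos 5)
Your proposal is correct and follows essentially the same route as the paper: take an $\varepsilon$-optimal factorization $f_n = a y_n b$, absorb the scalars into the middle term via $a_n f_n = a(a_n y_n)b$ with $\sup_n\|a_n y_n\|_\infty \le \sup_n|a_n|\sup_n\|y_n\|_\infty$, and let $\varepsilon\to 0$. The only (harmless) addition is your explicit dismissal of the case $\sup_n|a_n|=\infty$, which the paper leaves implicit.
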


\begin{proof}
	According to the definition of $L_2(\cm; \ell_\infty)$, for any $\varepsilon >0$, there is a decomposition $f_n =ay_nb$, for any $n \ge 1$ such that
	$$\Vert a \Vert_4 \sup_{ n }\Vert y_n \Vert_{\infty} \Vert b \Vert_4 \le \left\|{\sup_{n}}^+ f_n    \right\|_2 +\varepsilon . $$
	Hence $a_nf_n= aa_ny_n b$  and we can further express it as:
	$$\Vert a \Vert_4 \sup_{ n } \Vert a_ny_n \Vert_{\infty} \Vert b \Vert_4 \le \Vert a \Vert_4 \sup_{ n }\vert a_n \vert \sup_{ n } \Vert y_n \Vert_{\infty} \Vert b \Vert_4
	\le \sup_{ n }\vert a_n \vert \left( \left\|{\sup_{n}}^+ f_n    \right\|_2 +\varepsilon \right). $$
	By letting $\varepsilon \to 0 $,  we  have
	$$\left\|  {\sup_{n}}^+ a_nf_n    \right\|_2 \le \sup_{n}\vert a_n \vert \left\|  {\sup_{n}}^+ f_n    \right\|_2  . $$
\end{proof}

The following two lemmas have been established in \cite{Gat06} in the scalar-valued case, we present them here with a proof to facilitate the reader's understanding on the proof of Theorem \ref{1.1}.

\begin{lemma}\label{3.1}
	Let $f \in L_2(\mathcal{N})$ and $ n \in \mathbb{N}$ such that $ \vert n \vert =A$. Then $ \sigma_n (f) = \sigma_n(E_{A+1} f).$
\end{lemma}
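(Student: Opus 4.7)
The plan is to show that the Fejér sum $\sigma_n f$ only depends on Fourier coefficients of $f$ up to index $n-1$, and that each such coefficient coincides with the corresponding coefficient of $E_{A+1}f$, because the characters involved are measurable with respect to $\mathcal{F}_{A+1}$.

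First I would unwind the definitions. From~\eqref{defFejermeans} one has
\[
\sigma_n f(y) = \frac{1}{n}\sum_{k=0}^{n-1}\sum_{j=0}^{k-1}\hat f(j)\chi_j(y),
\]
so $\sigma_n f$ involves only the coefficients $\hat f(0),\dots,\hat f(n-2)$. Since $|n|=A$, the expansion~\eqref{eq:unique expansion of n} gives $n=\sum_{k=0}^{A}n_kP_k$ with $n_A\neq 0$, hence $n\le P_{A+1}$. Consequently, every index $j$ occurring in the sum satisfies $j<P_{A+1}$, and its expansion $j=\sum_{k\ge 0}j_kP_k$ has $j_k=0$ for all $k\ge A+1$. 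Therefore
\[
\chi_j(x)=\prod_{k=0}^{A}r_k^{j_k}(x_k),
\]
which depends only on the coordinates $x_0,\dots,x_A$ and is thus $\mathcal{F}_{A+1}$-measurable (equivalently, $\chi_j\in L_\infty(\mathcal{F}_{A+1})$).

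Next I would verify that $\widehat{E_{A+1}f}(j)=\hat f(j)$ for every such $j$. Since the conditional expectation on $\mathcal N=L_\infty(G)\bar\otimes \mathcal M$ is the tensor product of the classical $E_{A+1}$ with $\mathrm{id}_{\mathcal M}$, and since $\bar\chi_j\in L_\infty(\mathcal F_{A+1})\bar\otimes\mathbb C$ is a bimodule scalar, the module property yields $E_{A+1}(f\,\bar\chi_j)=E_{A+1}(f)\,\bar\chi_j$. Combined with the trace preservation $\int_G E_{A+1}(g)\,d\mu=\int_G g\,d\mu$ (valued in $L_2(\mathcal M)$), this gives
\[
\widehat{E_{A+1}f}(j)
=\int_G E_{A+1}(f)(x)\,\bar\chi_j(x)\,d\mu(x)
=\int_G E_{A+1}\bigl(f\,\bar\chi_j\bigr)(x)\,d\mu(x)
=\int_G f(x)\,\bar\chi_j(x)\,d\mu(x)
=\hat f(j),
\]
for every $j<P_{A+1}$. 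Plugging this back into the display for $\sigma_n(E_{A+1}f)$ gives $\sigma_n(E_{A+1}f)=\sigma_n f$.

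There is no real obstacle here; the only subtlety is making sure the noncommutativity does not interfere. This is handled cleanly by observing that $\chi_j$ is a complex-scalar multiplier, so it commutes with everything in $\mathcal M$ and with $E_{A+1}$ acts on it trivially once $j<P_{A+1}$. The reduction to finite Fourier sums together with measurability of the relevant characters is the whole content of the lemma.
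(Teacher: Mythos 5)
Your proposal is correct, and it establishes the crucial identity $\widehat{E_{A+1}f}(j)=\hat f(j)$ for $j<P_{A+1}$ by a genuinely different mechanism than the paper. The paper first invokes Lemma~\ref{4.3} to write $E_{A+1}f=S_{P_{A+1}}f=\sum_{j=0}^{P_{A+1}-1}\hat f(j)\chi_j$ --- a step that rests on the Dirichlet-kernel identity of Lemma~\ref{4.2} and on the standing hypothesis that $f$ is constant on conjugacy classes --- and then matches Fourier coefficients using orthonormality of the characters. You instead observe that for $j<P_{A+1}$ the character $\chi_j$ depends only on the coordinates $x_0,\dots,x_A$, hence $\bar\chi_j\otimes 1_{\mathcal M}$ is a bounded element of the subalgebra $L_\infty(\mathcal F_{A+1})\bar\otimes\mathcal M$, and you combine the bimodule property $E_{A+1}(f\bar\chi_j)=E_{A+1}(f)\bar\chi_j$ with integral preservation of the conditional expectation; this is self-adjointness of $E_{A+1}$ in disguise. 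What your route buys is independence from Lemmas~\ref{4.2} and~\ref{4.3} and from the class-function assumption on $f$, so it is more self-contained and slightly more general; what the paper's route buys is economy, since Lemma~\ref{4.3} is needed elsewhere anyway (e.g., in the proofs of Lemma~\ref{4.4} and Proposition~\ref{1.2}), so no global work is saved by avoiding it. Two harmless inaccuracies in your write-up: $|n|=A$ actually gives $n\le P_{A+1}-1$ rather than just $n\le P_{A+1}$ (immaterial, since only indices $j\le n-2<P_{A+1}$ occur), and the statement that $E_{A+1}$ ``acts trivially'' on $\chi_j$ is not what you use --- what you use, correctly, is the module property against $f$.
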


\begin{proof}
By Lemma \ref{4.3}, %\red{\{comments for authors: Referring to the lemma later in the proof may confuse readers and create the impression of circular reasoning. It's advisable to adjust the order for clarity.\} }
 we have
\begin{align}\label{EnChi}E_{A+1}f(x)=S_{P_{A+1}}f(x)=\sum_{j=0}^{P_{A+1}-1} \hat{f}(j) 
		 \chi_j(x).\end{align}
Then by the complete orthogonality property of the character system and $P_A \le n < P_{A+1}$, we have that
	for $i< P_{A+1}$, 
	\begin{flalign*}
		\int_{G} E_{A+1}f(x)\bar{\chi}_i(x) d\mu(x) &=\int_{G} \sum_{j=0}^{P_{A+1}-1} \hat{f}(j) 
		 \chi_j(x)\bar{\chi}_i(x) d\mu(x)\\ 
		 &= \int_{G}  \hat{f}(i)  \chi_i(x)  \bar{\chi}_i(x) d\mu(x) \\
		 &=\hat{f}(i) \Vert \chi_i \Vert_{L_2(G)}^2 \\
		 &=\int_{G} f(x)\bar{\chi}_i(x) d\mu(x).
	\end{flalign*}
Recalling the definition of Fej\'er means \eqref{defFejermeans}, then we immediately obtain the desired identity.
%	\begin{flalign*}
%		\sigma_nf(y)&= \int_{G} f(x) K_n(y,x)d\mu(x) \\
%		&=\int_{G}E_{A+1}f(x) K_n(y,x) d\mu(x) \\
%		&= \sigma_n(E_{A+1}f)(y).
%	\end{flalign*}	
\end{proof}

\begin{lemma}\label{3.4}
	 Let $f \in L_2(\mathcal{N})$ and $n\in\mathbb N$ such that $|n|= A $, then 
	\begin{flalign}\label{e3.1}
		n\sigma_{n}(E_Af)=P_A \sigma_{P_A}(E_Af)+(n-P_A)E_Af.
	\end{flalign}
	Moreover,
	$$P_A\sigma_{P_A}(E_Af)=P_A\sigma_{P_A}(E_Af-E_{A-1}f)+P_{A-1}\sigma_{P_{A-1}}(E_{A-1}f)+(P_A-P_{A-1})E_{A-1}f .$$
	
\end{lemma}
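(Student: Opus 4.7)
The key observation is that, by Lemma \ref{4.3} (or equivalently the identity \eqref{EnChi} appearing in the proof of Lemma \ref{3.1}), we have $E_A f = S_{P_A} f$, meaning $E_A f$ admits the expansion
$$E_A f(x)=\sum_{j=0}^{P_A-1}\hat f(j)\chi_j(x).$$
A direct computation using the orthogonality of the character system then shows that $\widehat{E_A f}(j)=\hat f(j)$ for $j<P_A$ and $\widehat{E_A f}(j)=0$ for $j\geq P_A$. Consequently, for every integer $k\geq P_A$ the Fourier partial sum stabilizes:
$$S_k(E_A f)=\sum_{j=0}^{k-1}\widehat{E_A f}(j)\chi_j=\sum_{j=0}^{P_A-1}\hat f(j)\chi_j=E_A f.$$

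With this at hand, the first identity follows by splitting the defining sum of $\sigma_n$ at the index $P_A$. More precisely, since $|n|=A$ forces $P_A\leq n<P_{A+1}$, I would write
$$n\sigma_n(E_A f)=\sum_{k=0}^{n-1}S_k(E_A f)=\sum_{k=0}^{P_A-1}S_k(E_A f)+\sum_{k=P_A}^{n-1}S_k(E_A f),$$
and then identify the first sum as $P_A\sigma_{P_A}(E_A f)$ and use the stabilization $S_k(E_A f)=E_A f$ for $k\geq P_A$ on the second sum to obtain $(n-P_A)E_A f$.

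For the second identity, I would use linearity of $\sigma_{P_A}$ to write
$$P_A\sigma_{P_A}(E_A f)=P_A\sigma_{P_A}(E_A f-E_{A-1}f)+P_A\sigma_{P_A}(E_{A-1}f),$$
and then apply exactly the same splitting argument as above to the second term: since the Fourier coefficients of $E_{A-1}f$ vanish beyond index $P_{A-1}$, one has $S_k(E_{A-1}f)=E_{A-1}f$ for all $k\geq P_{A-1}$, and splitting $\sum_{k=0}^{P_A-1}S_k(E_{A-1}f)$ at the index $P_{A-1}$ gives
$$P_A\sigma_{P_A}(E_{A-1}f)=P_{A-1}\sigma_{P_{A-1}}(E_{A-1}f)+(P_A-P_{A-1})E_{A-1}f.$$
Combining these two equalities yields the second claimed identity.

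I do not anticipate any serious obstacle: everything is a matter of cleanly rearranging finite sums of Fourier partial sums, and the noncommutativity of the values of $f$ plays no role since each operation involved ($\sigma_n$, $S_k$, $E_A$) is $\mathcal{M}$-linear. The only point meriting care is to verify that the stabilization $S_k(E_A f)=E_A f$ for $k\geq P_A$ is genuinely available in the $L_2(\mathcal{N})$-valued setting, which follows from the scalar-valued Plancherel/orthogonality argument applied coefficient-wise (equivalently, from extending the argument of Lemma \ref{3.1} to arbitrary indices $k$ in the range $[P_A,P_{A+1})$).
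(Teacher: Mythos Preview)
Your proposal is correct and follows essentially the same approach as the paper: use $E_A f=S_{P_A}f$ (equivalently, the vanishing of $\widehat{E_A f}(j)$ for $j\ge P_A$) to conclude $S_k(E_A f)=E_A f$ for $k\ge P_A$, split the Fej\'er sum at $P_A$, and then obtain the second identity by linearity together with the same splitting applied to $E_{A-1}f$. The only difference is cosmetic---the paper phrases the last step as ``apply \eqref{e3.1} with $n=P_A$ to $E_{A-1}f$''---and your remarks on $\mathcal{M}$-linearity are extra caution rather than a new ingredient.
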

\begin{proof}
	From Identity~\eqref{EnChi}, it is obvious that
	$$\int_{G}E_Af(x) \bar{\chi}_j(x) d\mu(x) = 0 , \qquad \forall j \ge P_A .$$
	So for $P_A\leq k\leq n-1$, $S_k(E_Af)=S_{P_A}E_Af=E_Af$, and thus
	\begin{align*}
	n\sigma_n(E_Af) &=\sum^{P_A-1}_{k=0}S_k(E_Af)+\sum^{n-1}_{k=P_A}S_k(E_Af)\\
	&=P_A\sigma_{P_A}(E_Af)+(n-P_A)E_Af.
	\end{align*}
	This is Identity~\eqref{e3.1}. 
		
	To obtain the second identity, first observe that
	$$P_A\sigma_{P_A}(E_Af)=P_A\sigma_{P_A}(E_Af-E_{A-1}f) +P_A\sigma_{P_A}(E_{A-1}f).$$
	Then the application of \eqref{e3.1} to $n = P_A$ and $E_{A-1}f$ yields the desired identity. 
	\end{proof}

We are now in the position to prove Theorem \ref{1.1}.

\begin{proof}[\bf{Proof of Theorem \ref{1.1}}]
From Lemma \ref{3.1}, we have
$$\left\|  {\sup_{n\geq1}}^+ \sigma_nf    \right\|_2=  \left\|  {\sup_{A\geq0}}^+ {\sup_{\vert n \vert  =A}}^+ \sigma_nf   \right\|_2  =\left\|  {\sup_{A\geq0}}^+ {\sup_{\vert n \vert  =A}}^+ \sigma_n(E_{A+1}f)   \right\|_2.$$
Based on Lemma \ref{3.2} and Lemma \ref{3.4}, it is established that
	\begin{flalign*}
		\left\|  {\sup_{A}}^+ {\sup_{\vert n \vert  =A}}^+ \sigma_n(E_{A+1}f)   \right\|_2  
		&\le \left\|  {\sup_{A}}^+ {\sup_{\vert n \vert  =A}}^+ \sigma_n(E_{A+1}f-E_Af )   \right\|_2 +\left\|  {\sup_{A}}^+ {\sup_{\vert n \vert  =A}}^+ \sigma_n(E_Af)   \right\|_2  \\
		&\le  \left(  \sum_{A} \left\| {\sup_{\vert n \vert  =A}}^+ \sigma_n(E_{A+1}f-E_Af)   \right\|_2^2 \right)^{\frac{1}{2}}  \\
		&\ \ \ \quad+
		\left\|  {\sup_{A}}^+ {\sup_{\vert n \vert  =A}}^+ \frac{P_A}{n}\sigma_{P_A}(E_A f)+\frac{n-P_A}{n}E_Af   \right\|_2  \\
		&\eqqcolon \uppercase\expandafter{\romannumeral1}+\uppercase\expandafter{\romannumeral2}.
	\end{flalign*}
We proceed by estimating the term $\uppercase\expandafter{\romannumeral1}$ first.  According to Proposition \ref{1.3},
$$\uppercase\expandafter{\romannumeral1}\le 640\rho(G)^2\left(  \sum_{A}  \Vert E_{A+1}f-E_Af \Vert_2^2 \right)^{\frac{1}{2}}.$$
Following the  to the martingale result \eqref{2.4}, we find
\begin{flalign*}
	\left( \sum_{A}  \Vert E_{A+1}f-E_Af \Vert_2^2\right)^{\frac{1}{2}}&=  \left\| \left(  \sum_{A} \vert E_{A+1}f-E_Af \vert^2 \right)^{\frac{1}{2}}  \right\|_2 \\
	&\le \Vert f \Vert_2.
\end{flalign*}
Consequently,
$$\uppercase\expandafter{\romannumeral1}\le 640 \rho(G)^2 \Vert f \Vert_2.$$

Next, we turn our attention to estimate the term $\uppercase\expandafter{\romannumeral 2}$. By the triangle inequality, we get
$$\uppercase\expandafter{\romannumeral2} \le\left\|  {\sup_{A}}^+ {\sup_{\vert n \vert  =A}}^+ \frac{n-P_A}{n}(E_Af)   \right\|_2 + 	\left\|  {\sup_{A}}^+ {\sup_{\vert n \vert  =A}}^+ \frac{P_A}{n} \sigma_{P_A}(E_Af)   \right\|_2,$$
	and utilizing noncommutative Doob's inequality \eqref{2.40} and along with Lemma~\ref{3.3}, we find
$$\uppercase\expandafter{\romannumeral2} \le  2\Vert    f \Vert_2 + 	\left\|  {\sup_{A}}^+ {\sup_{\vert n \vert  =A}}^+  \sigma_{P_A}(E_Af)   \right\|_2= 2\Vert    f \Vert_2 + 	\left\|  {\sup_{A}}^+  \sigma_{P_A}(E_Af)   \right\|_2.$$
Now it suffices to estimate the second term. By applying Lemma \ref{3.3}, Lemma $\ref{3.4}$, the triangle inequality and the fact that $p_k \ge 2$ for each  $k \in  \mathbb{N}$, we deduce
 \begin{flalign*}
 \left\|  {\sup_{A}}^+  \sigma_{P_A}(E_Af)   \right\|_2 
 	&\le \left\|  {\sup_{A}}^+  \frac{P_{A-1}}{P_A}\sigma_{P_{A-1}}(E_{A-1}f)   \right\|_2 +
 	 \left\|  {\sup_{A}}^+  \frac{P_A-P_{A-1}}{P_A}E_{A-1}f   \right\|_2\\
	 &\quad\quad +
 	  \left\|  {\sup_{A}}^+  \sigma_{P_A}(E_Af-E_{A-1}f)   \right\|_2  \\
 	&\le \frac{1}{2}\left\|  {\sup_{A}}^+  \sigma_{P_{A-1}}(E_{A-1}f)   \right\|_2 +
 	\left\|  {\sup_{A}}^+  E_{A-1}f   \right\|_2    \\
 	&\quad\quad  + \left\|  {\sup_{A}}^+  \sigma_{P_A}(E_Af-E_{A-1}f)   \right\|_2.  
 \end{flalign*}

Since $ \vert P_A \vert =A, P_A \in \{n: \vert n \vert =A \}$,  and according to the result obtained for  the term  $\uppercase\expandafter{\romannumeral1}$, we establish
$$\left\|  {\sup_{A}}^+  \sigma_{P_A}(E_Af-E_{A-1}f)   \right\|_2 \le \left\|  {\sup_{A}}^+ {\sup_{\vert n \vert  =A}}^+ \sigma_{n}(E_Af-E_{A-1}f)   \right\|_2 \le 640 \rho(G)^2 \Vert f \Vert_2.$$
This leads us to the following estimate:
 \begin{flalign*}
 	\left\|  {\sup_{A}}^+  \sigma_{P_A}(E_Af)   \right\|_2 &\le \frac{1}{2}\left\|  {\sup_{A}}^+  \sigma_{P_{A-1}}(E_{A-1}f)   \right\|_2 +
 	(2+640\rho(G)^2)\Vert f \Vert_2  \\
 	&\le \frac{1}{2}\left\|  {\sup_{A}}^+  \sigma_{P_{A}}(E_{A}f)   \right\|_2 +
 	(2+640\rho(G)^2)\Vert f \Vert_2. 
 \end{flalign*}
Therefore, we obtain
 $$ \left\|  {\sup_{A}}^+  \sigma_{P_{A}}(E_{A}f)   \right\|_2  \le
 2(2+640\rho(G)^2)\Vert f \Vert_2.$$
Consequently, 
$$\left\|  {\sup_{n \in \mathbb{Z}_+}}^+ \sigma_nf    \right\|_2 \le \uppercase\expandafter{\romannumeral1}+\uppercase\expandafter{\romannumeral2} \lesssim \rho(G)^2\Vert f \Vert_2.$$
\end{proof}

\section{Proof of Proposition \ref{1.3}: based on Proposition \ref{1.2}}\label{sec4}
Assuming the validity of Proposition \ref{1.2}, we consequently deduce  Proposition \ref{1.3} in this section.

\begin{proof}[\bf{Proof of Proposition \ref{1.3}}]
	
	Observe that for $n$ when $\vert n \vert =A$, the following equality holds  
	\begin{align}\label{formula}nK_n(y,x) = \sum_{s=0}^{A} \sum_{j=0}^{n_s-1} K_{n^{(s+1)}+jP_s,P_s}(y,x). \end{align}
	This leads to
	$$\left\|  {\sup_{\vert n \vert =A}}^+  \sigma_nf \right\|_2 
	\le \frac{1}{n}\sum_{s=0}^{A} \sum_{j=0}^{n_s-1} \left\|  {\sup_{\vert n \vert =A}}^+ \int_{G} f(x) K_{n^{(s+1)}+jP_s,P_s}(y,x) d\mu(x) \right\|_2 . $$
	Considering each fixed $j$ $(j \in \{ 0,\cdots , n_s-1\})$, we obtain
	$$\left\|  {\sup_{\vert n \vert =A}}^+ \int_{G} f(x) K_{n^{(s+1)}+jP_s,P_s}(y,x) d\mu(x) \right\|_2
	\le \left\|  {\sup_{\vert n \vert =A}}^+ \int_{G} f(x) K_{n^{(s)},P_s}(y,x) d\mu(x) \right\|_2 . $$
	Combining Proposition \ref{1.2} and the fact that $n_s \le p_s $ for any $s \in \mathbb{N}$, we get
	\begin{flalign*}
		\left\|  {\sup_{\vert n \vert =A}}^+  \sigma_nf \right\|_2  &\le 40\rho(G)^2\frac{n_s}{P_A}\sum_{s=0}^{A}  P_s  ((A-s)^2+1) \Vert f \Vert_2  \\
		 &\le 40\rho(G)^2 \sum_{s=0}^{A}  \frac{p_sP_s}{P_A}  ((A-s)^2+1) \Vert f \Vert_2  \\
		&= 40\rho(G)^2\sum_{s=0}^{A}  \frac{1}{p_{s+1}\cdots p_{A-1}}  ((A-s)^2+1) \Vert f \Vert_2 \\
		&\le 80 \rho(G)^2 \sum_{s=0}^{A}  \frac{1}{2^{A-s}}  ((A-s)^2+1) \Vert f \Vert_2 \\
		&= 80\rho(G)^2\sum_{i=0}^{A}  \frac{i^2+1}{2^{i}}   \Vert f \Vert_2 \\
		&\le 80\rho(G)^2 \sum_{i=0}^{\infty}  \frac{i^2+1}{2^{i}}   \Vert f \Vert_2 \leq640\rho(G)^2 \Vert f \Vert_2  .
	\end{flalign*}
\end{proof}

\section{Proof of Lemma \ref{4.4}}\label{section 4}
The purpose of this section is to prove Lemma \ref{4.4}, which will play an important role in the next section to demonstrate Proposition \ref{1.3}. To achieve this,  we introduce three pivotal lemmas to manipulate the Dirichlet kernels. These lemmas can be found in the literature. For instance, in \cite{Gat06} the author listed them and  cited the original  references. To enhance comprehension, we present their proofs herein.

\begin{lemma} \label{4.1}
	For all $n \in \mathbb{N}$, there holds
	$$D_n(y,x)=\sum_{\ell =0}^{+\infty}D_{P_\ell}(y,x)\sum_{i=0}^{n_\ell -1}r_\ell^i(y) \bar{r}_\ell ^i(x) \chi_{n^{(\ell+1)}}(y)\bar{\chi}_{n^{(\ell+1)}}(x).$$
\end{lemma}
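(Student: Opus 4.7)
\textbf{Proof proposal for Lemma \ref{4.1}.} The plan is to partition the summation set $\{0,1,\dots,n-1\}$ in the definition $D_n(y,x)=\sum_{k=0}^{n-1}\chi_k(y)\bar\chi_k(x)$ according to the \emph{highest position at which the mixed-radix digits of $k$ fall below those of $n$}. Writing $n=\sum_{k\ge0}n_kP_k$ and $k=\sum_{j\ge 0}k_jP_j$ in their unique expansions, the strict inequality $k<n$ holds if and only if there is a (necessarily unique) index $\ell\ge 0$ such that
\begin{equation*}
k_j=n_j\ \text{for all}\ j>\ell,\qquad k_\ell<n_\ell,
\end{equation*}
while $k_0,\dots,k_{\ell-1}$ range freely over $\{0,\dots,p_j-1\}$. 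This gives a disjoint decomposition $\{0,\dots,n-1\}=\bigsqcup_{\ell\ge 0}\Omega_\ell$, where the index $\ell$ is restricted to $0\le\ell\le|n|$ in an essential way but can be allowed to run to $\infty$ vacuously since the condition $k_\ell<n_\ell=0$ is empty for $\ell>|n|$.

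Next I would factor $\chi_k\bar\chi_k$ using the product structure $\chi_k(x)=\prod_{j\ge 0}r_j^{k_j}(x)$. For $k\in\Omega_\ell$ the digits above $\ell$ agree with those of $n^{(\ell+1)}=\sum_{j\ge\ell+1}n_jP_j$, so
\begin{equation*}
\chi_k(y)\bar\chi_k(x)=\chi_{n^{(\ell+1)}}(y)\bar\chi_{n^{(\ell+1)}}(x)\cdot r_\ell^{k_\ell}(y)\bar r_\ell^{k_\ell}(x)\cdot\prod_{j<\ell}r_j^{k_j}(y)\bar r_j^{k_j}(x).
\end{equation*}
Summing over $k\in\Omega_\ell$, the contribution from position $\ell$ is $\sum_{i=0}^{n_\ell-1}r_\ell^i(y)\bar r_\ell^i(x)$, and the contribution from the free lower coordinates factorises as $\prod_{j<\ell}\bigl(\sum_{k_j=0}^{p_j-1}r_j^{k_j}(y)\bar r_j^{k_j}(x)\bigr)$.

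The last step is to recognise the product over $j<\ell$ as $D_{P_\ell}(y,x)$. Indeed, every integer $m<P_\ell$ has its mixed-radix expansion supported on positions $0,\dots,\ell-1$, so using $r_j^0\equiv 1$ one obtains
\begin{equation*}
D_{P_\ell}(y,x)=\sum_{m=0}^{P_\ell-1}\chi_m(y)\bar\chi_m(x)=\prod_{j<\ell}\Bigl(\sum_{k_j=0}^{p_j-1}r_j^{k_j}(y)\bar r_j^{k_j}(x)\Bigr).
\end{equation*}
Assembling the three factors and summing over $\ell$ yields the claimed identity, with the understanding that terms with $\ell>|n|$ vanish because the inner sum $\sum_{i=0}^{-1}$ is empty. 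There is no serious obstacle: the argument is essentially combinatorial book-keeping, and the only point that requires a bit of care is verifying the uniqueness of the index $\ell$ in the partition of $\{0,\dots,n-1\}$, which is standard for mixed-radix expansions.
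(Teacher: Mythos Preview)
Your argument is correct and is essentially the same as the paper's: both partition $\{0,\dots,n-1\}$ according to the top digit where $k$ first falls below $n$, then factor $\chi_k\bar\chi_k$ via the product structure and identify the free lower block with $D_{P_\ell}$. The only cosmetic difference is that the paper carries this out recursively (peeling off the top digit $n_A$ and iterating on $n-n_AP_A$), whereas you unwind the recursion in one step via the lexicographic decomposition; the content is identical.
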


\begin{proof}
	Without loss of generality, one may assume that $|n|=A$, that is $n=n_0P_0+\cdots + n_AP_A$. Rewriting the Dirichlet kernel, we reformulate it as follows:
	$$D_n(y,x)= \sum_{k=0}^{n-1}\chi_k(y)\bar{\chi}_k(x)=\left( \sum_{k=0}^{P_A-1}+\sum_{k=P_A}^{2P_A-1}+\cdots+\sum_{k=(n_A-1)P_A}^{n_AP_A-1}
	+\sum_{k=n_AP_A}^{n-1}	\right)   \chi_k(y) \bar{\chi}_k(x) . $$
	From the second term to the penultimate one, the summing indices $k$ can be rewritten as $k=k'+1\cdot P_A$, $\dotsm$,  $k=k'+(n_A-1)\cdot P_A$, where  $k'=k_0P_0+\cdots+k_{A-1}P_{A-1}$ ranging from $0$ to $P_A-1$. Thus
	$$\sum_{k=P_A}^{2P_A-1}\chi_k(y)\bar{\chi}_k(x)=\sum_{k=0}^{P_A-1}	\chi_k(y)\bar{\chi}_k(x) r_A^1(y) \bar{r}_A^1(x),$$
	%First look at the sum of $k$ for the second item. Besides, the general expression of  $k$ can be written as  $k=k_0P_0+\cdots+k_{A-1}P_{A-1}+1\cdot P_A=k^\prime +1\cdot P_A$, where   $k^\prime$  takes the entire  $0,\cdots,P_A-1$. and note that $\chi_k(y)\bar{\chi}_k(x)=\chi_{k^\prime}(y)\bar{\chi}_{k^\prime}(x) r_A^1(y) \bar{r}_A^1(x)$. 
	$$\dotsm,$$
	$$\sum_{k=(n_A-1)P_A}^{n_AP_A-1} \chi_k(y) \bar{\chi}_k(x)=
	\sum_{k=0}^{P_A-1}\chi_k(y)\bar{\chi}_k(x)r_A^{n_A-1}(y) \bar{r}_A^{n_A-1}(x).
	$$
	%For the second to penultimate terms, we use the same method as above. And the penultimate term is given by
	%$$\sum_{k=(n_A-1)P_A}^{n_AP_A-1} \chi_k(y) \bar{\chi}_k(x)=
	%\sum_{k=0}^{P_A-1}\chi_k(y)\bar{\chi}_k(x)r_A^{n_A-1}(y) \bar{r}_A^{n_A-1}(x).
	%$$
	Regarding the last term, by representing the summation index index $k$ as $k=k'+n_AP_A$ with $k'$ ranging from $0$ to $n-1-n_AP_A$,  it is deduced that
	$$ \sum_{k=n_AP_A}^{n-1} \chi_{k}(y) \bar{\chi}_{k}(x)=\left( \sum_{k=0}^{n-1-n_AP_A} \chi_{k}(y) \bar{\chi}_{k}(x)\right)  r_A^{n_A}(y) \bar{r}_A^{n_A}(x)$$
	Therefore, the following is obtained
	$$ D_n(y,x)=D_{P_A}(y,x)\sum_{i =0}^{n_A-1}r_A^i (y)\bar{r}_A^i (x)+
	\left( \sum_{k=0}^{n-1-n_AP_A} \chi_{k}(y) \bar{\chi}_{k}(x)\right)  r_A^{n_A}(y) \bar{r}_A^{n_A}(x).
	$$
	Note that the above summation inside the bracket can be addressed similarly. Repeating the procedure, we conclude finally that
	%Now we deal with the term $\sum_{k=n_AP_A}^{n-1} \chi_k(y) \bar{\chi}_k(x) $. Similarly, we write the general expression for $k,$ $k=k_0+\cdots+k_{A-1}P_{A-1}+n_AP_A=k^\prime +n_AP_A$,
	%\begin{flalign*}
	%	D_n(y,x)&=D_{P_A}(y,x)\sum_{i =0}^{n_A-1}r_A^i (y)\bar{r}_A^i (x)+\left( \sum_{k=n_AP_A}^{n-1} \chi_{k^\prime}(y) \bar{\chi}_{k^\prime}(x)\right)  r_A^{n_A}(y) \bar{r}_A^{n_A}(x)  \\
	%	&=D_{P_A}(y,x)\sum_{i =0}^{n_A-1}r_A^i (y)\bar{r}_A^i (x)+\left( \sum_{k=0}^{n-1-n_AP_A} \chi_{k}(y) \bar{\chi}_{k}(x)\right)  r_A^{n_A}(y) \bar{r}_A^{n_A}(x)
	%	\\
	%	&=D_{P_A}(y,x)\sum_{i =0}^{n_A-1}r_A^i (y)\bar{r}_A^i (x)+D_{n-n_AP_A}(y,x)  r_A^{n_A}(y) \bar{r}_A^{n_A}(x).
	%\end{flalign*}
	$$D_n(y,x)=\cdots=\sum_{\ell =0}^{A}D_{P_\ell}(y,x)\sum_{i=0}^{n_\ell -1}r_\ell^i(y) \bar{r}_\ell ^i(x) \chi_{n^{(\ell+1)}}(y)\bar{\chi}_{n^{(\ell+1)}}(x). $$
\end{proof}

\begin{lemma}\label{4.2}
For all $n \in \mathbb{N}$,	
	$$D_{P_n}(y,x)=\mu^{-1}(J_n(y)) \boldsymbol{1}_{J_n(y)}(x)  , \quad  	x,y \in G.	$$	
\end{lemma}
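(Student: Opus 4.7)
The plan is to expand $D_{P_n}(y,x)=\sum_{k=0}^{P_n-1}\chi_k(y)\bar\chi_k(x)$ coordinate-wise, reduce the sum to a product of finite-group character sums, and then apply the second orthogonality relation on each factor $G_\ell$.

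First, I would use the unique expansion \eqref{eq:unique expansion of n}: the integers $k$ with $0\le k<P_n$ are exactly those of the form $k=\sum_{\ell=0}^{n-1}k_\ell P_\ell$ with $0\le k_\ell\le p_\ell-1$ (and $k_\ell=0$ for $\ell\ge n$). Since $\chi_k(x)=\prod_{\ell\ge0}r_\ell^{k_\ell}(x_\ell)$ and $r_\ell^0\equiv 1$, this gives the factorization
\begin{equation*}
D_{P_n}(y,x)=\sum_{k_0=0}^{p_0-1}\cdots\sum_{k_{n-1}=0}^{p_{n-1}-1}\prod_{\ell=0}^{n-1}r_\ell^{k_\ell}(y_\ell)\bar r_\ell^{k_\ell}(x_\ell)=\prod_{\ell=0}^{n-1}\Bigl(\sum_{i=0}^{p_\ell-1}r_\ell^{i}(y_\ell)\bar r_\ell^{i}(x_\ell)\Bigr).
\end{equation*}

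Next, recall that each $G_\ell$ is a finite group with exactly $p_\ell$ irreducible characters $r_\ell^0,\dots,r_\ell^{p_\ell-1}$. The second orthogonality relation for finite-group characters states that for $y_\ell,x_\ell\in G_\ell$,
\begin{equation*}
\sum_{i=0}^{p_\ell-1}r_\ell^{i}(y_\ell)\bar r_\ell^{i}(x_\ell)=\frac{m_\ell}{|j_\ell(y_\ell)|}\boldsymbol{1}_{j_\ell(y_\ell)}(x_\ell),
\end{equation*}
where $|j_\ell(y_\ell)|$ denotes the cardinality of the conjugacy class of $y_\ell$ in $G_\ell$ (so that $m_\ell/|j_\ell(y_\ell)|$ is the order of the centralizer of $y_\ell$). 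Substituting this identity into each factor of the product yields
\begin{equation*}
D_{P_n}(y,x)=\prod_{\ell=0}^{n-1}\frac{m_\ell}{|j_\ell(y_\ell)|}\boldsymbol{1}_{j_\ell(y_\ell)}(x_\ell).
\end{equation*}

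Finally, I would identify this product with the right-hand side of the claim. Using the product measure structure and $\mu_\ell(\{g\})=1/m_\ell$ for $g\in G_\ell$, one has
\begin{equation*}
\mu(J_n(y))=\prod_{\ell=0}^{n-1}\frac{|j_\ell(y_\ell)|}{m_\ell},\qquad \boldsymbol{1}_{J_n(y)}(x)=\prod_{\ell=0}^{n-1}\boldsymbol{1}_{j_\ell(y_\ell)}(x_\ell),
\end{equation*}
so the product above is exactly $\mu^{-1}(J_n(y))\boldsymbol{1}_{J_n(y)}(x)$, as required. The only non-routine ingredient is the second orthogonality relation for irreducible characters of the finite groups $G_\ell$, which is a standard fact from representation theory; everything else is bookkeeping via the unique expansion of $k$ and the product structure of $G$ and $\mu$.
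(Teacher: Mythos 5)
Your proposal is correct and follows essentially the same route as the paper: factor $D_{P_n}$ over the coordinates via the unique expansion of $k<P_n$, identify each factor $\sum_{i=0}^{p_\ell-1}r_\ell^{i}(y_\ell)\bar r_\ell^{i}(x_\ell)$ with $\mu_\ell^{-1}(j_\ell(y_\ell))\boldsymbol{1}_{j_\ell(y_\ell)}(x_\ell)$, and multiply using the product structure of $\mu$ and $J_n(y)$. The only cosmetic difference is that you invoke the standard second (column) orthogonality relation for irreducible characters, whereas the paper rederives that same single-group identity by applying Plancherel/Fourier inversion to the class function $\boldsymbol{1}_{j(y)}$.
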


\begin{proof}
This assertion follows from the corresponding identity for a finite group. Let $H$ be a finite group equipped with the canonical Haar measure. Denote by $\Sigma$ the equivalence classes of continuous irreducible unitary representations of $H$, which is still finite. We then claim
	$$\sum_{\sigma \in \Sigma}\chi _\sigma(y) \bar{\chi}_\sigma(x) =\mu^{-1}(j(y))\boldsymbol{1}_{j(y)}(x), $$
	where $\chi _\sigma$ is the character associated to $\sigma\in\Sigma$. Indeed, by the Plancherel theorem, for any complex-valued function $f$, one has 
	$$f(x)=\sum_{\sigma \in \Sigma}\hat{f}(\sigma)\chi_\sigma(x)=\int_{H}f(t)
	\sum_{\sigma \in \Sigma} \chi_\sigma(x) \bar{\chi}_\sigma(t)d\mu(t).
	$$
	Taking $f(x)=\boldsymbol{1}_{j(y)}(x)$, we obtains
\begin{align*}
	\boldsymbol{1}_{j(y)}(x)&=\int_{j(y)} \sum_{\sigma \in \Sigma} \chi_\sigma(x) \bar{\chi}_\sigma(t) d\mu(t)\\
	&=\sum_{\sigma \in \Sigma} \chi_\sigma(x) \int_{j(y)} \bar{\chi}_\sigma(t) d\mu(t)=\mu(j(y))\sum_{\sigma \in \Sigma}
	\chi_\sigma(x) \bar{\chi}_\sigma(y).
	\end{align*}
	
	Now, by applying the above claim repeatedly, we have
	\begin{align*}
		D_{P_n}(y,x)&= \sum_{k=0}^{P_n-1} \chi_k(y) \bar{\chi}_k(x) \\
		&=\sum_{k_0=0}^{p_0-1} \cdots \sum_{k_{n-1}=0}^{p_{n-1}-1}
		\chi_{k_0P_0+\cdots+k_{n-1}P_{n-1}}(y) \bar{\chi}_{k_0P_0+\cdots+k_{n-1}P_{n-1}}(x) \\
		&=\sum_{k_0=0}^{p_0-1} \cdots \sum_{k_{n-1}=0}^{p_{n-1}-1}
		\prod_{\ell =0}^{n-1} r_\ell^{k_\ell}(y) \bar{r}_\ell^{k_\ell}(x)
		= \prod_{\ell =0}^{n-1}\sum_{k_\ell=0}^{p_\ell -1} r_\ell^{k_\ell}(y) \bar{r}_\ell^{k_\ell}(x)   \\
		&=\prod_{\ell =0}^{n-1}\mu_\ell^{-1}(j_\ell(y))\boldsymbol{1}_{j_\ell(y)}(x)
		=\mu^{-1}(J_n(y)) \boldsymbol{1}_{J_n(y)}(x) .
	\end{align*}
\end{proof}

Recall that $m_k$ denotes the order of $G_k$. We define the sequence $(M_k)$ by 
$M_0 :=1$ and $M_{k+1}=m_kM_k$  for $k\geq 1$.
%Then there is a unique expansion for each natural number $n$,
%$$n = \sum_{k=0}^{\infty} n_kM_k, \qquad 0 \le n_k \le m_k-1.$$
\begin{lemma} \label{4.3}
	For any  $n \in \mathbb{N}$ and $f \in L_2(\mathcal{N})$, the following equality holds:
	$$S_{P_n}f(y)=E_nf(y) . $$
\end{lemma}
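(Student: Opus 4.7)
The plan is to compute $S_{P_n}f$ by convolution against the Dirichlet kernel $D_{P_n}$, use Lemma \ref{4.2} to reduce this to an averaging operator over $J_n(y)$, and then exploit the standing assumption that $f$ is constant on each conjugacy class of $G$ in order to identify that average with the conditional expectation $E_n f(y)$, whose defining $\sigma$-algebra $\mathcal F_n$ uses the strictly smaller atoms $I_n(y)$.

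Concretely, starting from the definition of the Fourier coefficients one has
$$S_{P_n}f(y)=\sum_{k=0}^{P_n-1}\hat f(k)\chi_k(y)=\int_G f(x)\sum_{k=0}^{P_n-1}\chi_k(y)\bar\chi_k(x)\,d\mu(x)=\int_G f(x)D_{P_n}(y,x)\,d\mu(x).$$
Applying Lemma \ref{4.2} immediately yields
$$S_{P_n}f(y)=\frac{1}{\mu(J_n(y))}\int_{J_n(y)}f(x)\,d\mu(x).$$
By definition, $E_n f(y)$ averages $f$ over $I_n(y)=\{x:x_j=y_j,\ 0\le j\le n-1\}$, i.e.\ integrates out only the coordinates $x_k$ with $k\ge n$ while freezing $x_0,\dotsc,x_{n-1}$ at $y_0,\dotsc,y_{n-1}$. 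The set $J_n(y)=j_0(y_0)\times\cdots\times j_{n-1}(y_{n-1})\times\bigtimes_{k\ge n}G_k$ is larger, but the conjugacy-class invariance is exactly what fills the gap: a conjugacy class of $G=\bigtimes_k G_k$ is a product of conjugacy classes of the factors, so for every $(x_0,\dotsc,x_{n-1})\in j_0(y_0)\times\cdots\times j_{n-1}(y_{n-1})$ and every tail $(x_n,x_{n+1},\dotsc)$ one has
$$f(x_0,\dotsc,x_{n-1},x_n,x_{n+1},\dotsc)=f(y_0,\dotsc,y_{n-1},x_n,x_{n+1},\dotsc).$$
Fubini then factorizes $\int_{J_n(y)}f\,d\mu$ as $\prod_{k=0}^{n-1}\mu_k(j_k(y_k))$ times the average over the tail coordinates, and dividing by $\mu(J_n(y))=\prod_{k=0}^{n-1}\mu_k(j_k(y_k))$ leaves precisely $E_n f(y)$.

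Finally, to legitimize the computation for a general $f\in L_2(\mathcal N)$ that is constant on each conjugacy class, I would first carry it out pointwise for $S_{\mathcal M}$-valued simple functions (where everything commutes with the scalar integral coordinate by coordinate), and then extend to $L_2(\mathcal N)$ by density, using that both $S_{P_n}$ (as an orthogonal projection onto the finite-dimensional span of $\chi_0,\dotsc,\chi_{P_n-1}$ in $L_2(\mathcal N)$) and $E_n$ are contractions on $L_2(\mathcal N)$. I do not expect any real obstacle; the only step requiring care is the conjugacy-by-coordinate factorization, where one must keep the product measure structure straight and make sure the values of $f$ on $J_n(y)$ are genuinely constant in the first $n$ coordinates.
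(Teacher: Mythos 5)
Your proposal is correct and follows essentially the same route as the paper: both reduce $S_{P_n}f$ via Lemma \ref{4.2} to the average of $f$ over $J_n(y)$ and then use constancy on conjugacy classes (which, since conjugation acts coordinatewise on the complete direct product, lets you freeze the first $n$ coordinates at $y$) to identify that average with the $I_n(y)$-average, i.e.\ $E_nf(y)$. The only cosmetic difference is that the paper decomposes $J_n(y)$ into the atoms $I_n(z)$ and equates the integrals, whereas you invoke pointwise invariance plus Fubini; your extra density remark is harmless but not needed.
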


\begin{proof}
	Firstly, invoking Lemma~\ref{4.2} yields
	$$S_{P_n}f(y)=\int_{G}f(x)D_{P_n}(y,x) d\mu(x)=\mu^{-1}(J_n(y))\int_{J_n(y)}f(x) d\mu(x).$$
	Recall that $I_0(y)=G $ and for $k \ge 1$,
	$$ I_k(z) := \{x \in G:x_j=z_j,0 \le j \le k-1 \},\quad z \in G.$$  
	This admits $J_ n(y)$ to be decomposed as
	$$J_n(y)=\bigcup_{z: z_i \in j_i(y)  \atop i \in \{ 0,\dotsm,n-1\} }I_n(z).$$
	Since $f$ is constant on each conjugacy class, for any $z$ satisfying $z_i \in j_i(y), i<n$,
	$$\int_{I_n(z)}f(x)d\mu(x) = \int_{I_n(y)}f(x) d\mu(x).$$
	Therefore, the equation rewrites to 
	\begin{align*}
	S_{P_n}f(y)&=\frac{1}{\mu(J_n(y))} \sum_{z:z_i \in j_i(y)  \atop i \in \{ 0,\dots ,n-1\} } \int_{I_n(y)}f(x) d\mu(x)\\
	&=M_n\int_{I_n(y)}f(x)d\mu(x) =\frac{1}{\mu(I_n(y))}\int_{I_n(y)}f(x)d\mu(x) =E_nf(y).
	\end{align*}
\end{proof}
%%%%%%%%TODO HERE

%we write(we let )(note that)(we define)((where)we denote)

Now, we are ready to show Lemma \ref{4.4}.

\begin{proof}[\bf{Proof of Lemma \ref{4.4}}]
	From the proof of Lemma \ref{4.3}, %\red{\{comments for authors: Referring to the lemma later in the proof may confuse readers and create the impression of circular reasoning. It's advisable to adjust the order for clarity.\} },
	we have  deduced the following equation
	$$\mu^{-1}(J_s(y)) \int_{J_s(y)} f(x) \bar{\chi}_{n^{(s)}}(x) d\mu(x) = M_s \int_{I_s(y)}f(x)\bar{\chi}_{n^{(s)}}(x) d\mu(x) . $$
	For any element $x \in G$, let us denote it as $x=(x_0,x_1,\cdots ,x_s,\cdots) = (x_{<s},x_{\ge s})$ where $x_{<s}= (x_0, \cdots, x_{s-1})$,  $x_{\ge s}= (x_s,x_{s+1},\cdots)$.
	Applying the above identity, we obtain
	\begin{flalign*}
		&\left\|  {\sup_{\vert n \vert =A}}^+     \mu^{-1}(J_s(y))\int_{J_s(y)} f(x)\chi_{n^{(s)}}(y) \bar{\chi}_{n^{(s)}}(x) d\mu(x)     \right\|_2^2   \\
		=&\left\|  {\sup_{\vert n \vert =A}}^+     M_s\int_{I_s(y)}f(x)\chi_{n^{(s)}}(y) \bar{\chi}_{n^{(s)}}(x) d\mu(x)     \right\|_2^2 \\
		(\text{Definition of $I_s(y)$})=&\left\|  {\sup_{\vert n \vert =A}}^+     M_s\int_{I_s(y)}f(y_{<s},x_{\ge s})\chi_{n^{(s)}}(y) \bar{\chi}_{n^{(s)}}(y_{<s},x_{\ge s}) d\mu(x)    \right\|_2^2 \\
		(\text{Definition of $n^{(s)}$})=&\left\|  {\sup_{n \in  \mathbb{N}_{s,A}}}^+     M_s\int_{I_s(y)}f(y_{<s},x_{\ge s})\chi_{n^{(s)}}(y) \bar{\chi}_{n^{(s)}}(y_{<s},x_{\ge s}) d\mu(x)   \right\|_2^2 \\
		( \text{By Lemma \ref{3.2}})\le&  \sum_{n \in \mathbb{N}_{s,A}}  \left\| M_s \int_{I_s(y)}f(y_{<s},x_{\ge s})\chi_{n^{(s)}}(y) \bar{\chi}_{n^{(s)}}(y_{<s},x_{\ge s}) d\mu(x)  \right\|_2^2 ,
	\end{flalign*}
	where 
	$$\mathbb{N}_{s,A}:= \left\lbrace n \mid { n_i \in \{0,\cdots ,p_i-1\},\  i \in \{s,\cdots ,A\}} \right\rbrace .$$
	%$$\mathbb{N}_{s,A}:= \left\lbrace n \mid { n_i \in \{0,\cdots ,p_i-1\} \atop i \in \{s,\cdots ,A\}} \right\rbrace \red{\text{\{ why display by 2 lines here?? \} }} .$$
	For convenience, we denote
	\begin{align}\label{Gt}
		G_{<t>}=&\bigtimes_{i=0}^{t-1 }G_i \times \bigtimes_{i=t+1}^{\infty }G_i, & G_{\ge A+1} =& \bigtimes\limits_{i=A+1}^{\infty }G_i,  \\
		G_{< s} =& G_0 \times \cdots \times G_{s-1}, & G_{s,A} =& G_s \times \cdots \times G_A . \nonumber
	\end{align}
	and set
	\begin{align*}
		d\mu(x_{<s}) = & d\mu_0(x_0) \times \cdots \times d\mu_{s-1}(x_{s-1}), \\
		d\mu(x_{s,A}) = & d\mu_s(x_s) \times \cdots \times d\mu_A(x_A), \\
		d\mu(x_{\ge A+1}) = & d\mu_{A+1}(x_{A+1})  \times \cdots\cdots.
	\end{align*}
	Since $n \in \mathbb{N}_{s,A}$ and the definition of $E_{A+1}f$, we have
	\begin{flalign*}
		&\sum_{n \in \mathbb{N}_{s,A}}  \left\|  \int_{G_{\ge s}}f(y_{<s},x_{\ge s} )\chi_{n^{(s)}}(y) \bar{\chi}_{n^{(s)}}(y_{<s},x_{\ge s}) d\mu(x_{\ge s})  \right\|_{L_2(\cm)}^2 \\
		=&\sum_{n \in \mathbb{N}_{s,A}}  \left\|  \int_{G_{s,A}}(E_{A+1}f)(y_{<s},x_s \cdots x_A,y_{\ge A+1} )\chi_{n^{(s)}}(y) \bar{\chi}_{n^{(s)}}(y_{<s},x_{\ge s}) d\mu(x_{s,A})  \right\|_{L_2(\cm)}^2.
	\end{flalign*}
	Noting $\mu(I_s(y))=1/M_s$, %and 
	%$$ \mu_0(\{ y_0 \}) \times \cdots \times \mu_{s-1}(\{ y_{s-1} \})=\frac{1}{m_0} \times \cdots \times \frac{1}{m_{s-1}} =\frac{1}{M_s},$$
	we can thus get
		\begin{align*}
			&\sum_{n \in \mathbb{N}_{s,A}}  \left\| M_s \int_{I_s(y)}f(y_{<s},x_{\ge s} )\chi_{n^{(s)}}(y) \bar{\chi}_{n^{(s)}}(y_{<s},x_{\ge s}) d\mu(x)  \right\|_{L_2(G; L_2(\cm))}^2 \\
			=& \int_{G} \sum_{n \in \mathbb{N}_{s,A}}  \left\| M_s \int_{I_s(y)}f(y_{<s},x_{\ge s} )\chi_{n^{(s)}}(y) \bar{\chi}_{n^{(s)}}(y_{<s},x_{\ge s}) d\mu(x)  \right\|_{L_2({\cm})}^2 d\mu(y) \\
			=&\int_{G} \sum_{n \in \mathbb{N}_{s,A}}  \left\|  \int_{G_{\ge s}}f(y_{<s},x_{\ge s} )\chi_{n^{(s)}}(y) \bar{\chi}_{n^{(s)}}(y_{<s},x_{\ge s}) d\mu(x_{\ge s})  \right\|_{L_2(\cm)}^2 d\mu(y) \\
			=&\int_{G} \sum_{n \in \mathbb{N}_{s,A}}  \left\|  \int_{G_{s,A}}(E_{A+1}f)(y_{<s},x_s \cdots x_A,y_{\ge A+1} )\chi_{n^{(s)}}(y) \bar{\chi}_{n^{(s)}}(y_{<s},x_{\ge s}) d\mu(x_{s,A})  \right\|_{L_2(\cm)}^2 d\mu(y). \\
		\end{align*}
	Fix $y_0,\cdots ,y_{s-1},y_{A+1} \cdots$. We refer to the definition  of Fourier transform on $G_{s,A}$,
		\begin{align*}\int_{G_{s,A}}(E_{A+1}f)(y_{<s},x_s \cdots x_A,y_{\ge A+1} ) &\bar{\chi}_{n^{(s)}}(y_{<s},x_{\ge s}) d\mu(x_{s,A}) \\
		&=( (E_{A+1}f)(y_{<s},\cdots ,y_{\ge A+1}))^{\wedge} (n^{(s)}).
		\end{align*}
	Hence, by applying  the Hilbert-valued Plancherel theorem on $G_{s,A}$ and considering Equation~\eqref{2.4}, we arrive at
		\begin{flalign*}
				&\sum_{n \in \mathbb{N}_{s,A}}  \left\| M_s \int_{I_s(y)}f(y_{<s},x_{\ge s} )\chi_{n^{(s)}}(y) \bar{\chi}_{n^{(s)}}(y_{<s},x_{\ge s}) d\mu(x)  \right\|_{L_2(G;L_2(\cm))}^2 \\
			=&\int_{G_{<s}} \int_{G_{ \ge A+1}} \int_{G_{s,A}}\sum_{n \in \mathbb{N}_{s,A}}  \left\|  ( (E_{A+1}f)(y_{<s},\cdots ,y_{\ge A+1}))^{\wedge} (n^{(s)}) \right\|_{L_2(\cm)}^2 \vert \chi_{n^{(s)}}(y) \vert^2 d\mu(y) \\
			=&\int_{G_{<s}} \int_{G_{ \ge A+1}} \int_{G_{s,A}}  \left\|   (E_{A+1}f)(y_{<s},\cdots ,y_{\ge A+1}) \right\|_{L_2(G_{s,A};L_2(\mathcal{M}))}^2 \vert \chi_{n^{(s)}}(y) \vert^2 d\mu(y) \\
			=&\int_{G_{<s}} \int_{G_{ \ge A+1}}  \left\|   (E_{A+1}f)(y_{<s},\cdots ,y_{\ge A+1}) \right\|_{L_2(G_{s,A};L_2(\mathcal{M}))}^2 \Vert \chi_{n^{(s)}}(y) \Vert_{L_2(G_{s,A})}^2 d\mu(y) \\
			=& \left\|   E_{A+1}f \right\|_2^2 		\le \left\|f \right\|_2^2 . 
		\end{flalign*}
	Consequently,
		$$\left\|  {\sup_{\vert n \vert =A}}^+\mu^{-1}(J_s(y))\int_{J_s(y)}f(x)\chi_{n^{(s)}}(y) \bar{\chi}_{n^{(s)}}(x) d\mu(x)\right\|_2 \le  \Vert f \Vert_2.$$
\end{proof}

\section{Proof of Proposition \ref{1.2}}\label{section 5}
With Lemma~\ref{4.4} established in the preceding section,  we finish the proof of the main result of the paper---Theorem \ref{1.1}---by proving Proposition~\ref{1.2}. As mentioned in the introduction, this proposition is the most technical portion of the whole paper. 

Our argument necessitates two lemmas on the properties of the space $L_p(\cm;\ell_\infty)$, which are well-known to the experts. The first one is  a Fubini type result, see e.g. \cite{Hon13} for a proof. 
The second one follows from a similar argument as in Lemma~\ref{3.3}. 

\begin{lemma}\label{5.1}
	If  $(f_n)_n \in L_2(\mathcal N;\ell_\infty)$,  then 
	$$\Vert (f_n)_n\Vert_{L_2(\mathcal N;\ell_{\infty})}  \le  \left\|  \Vert (f_n)_n \Vert_{L_2(L_\infty(G_{<t>})\overline{\otimes} \cm;\ell_{\infty})} \right\|_{L_2({G_t})},$$
	where $G_{<t>}$ is defined in \eqref{Gt}.
\end{lemma}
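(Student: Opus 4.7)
\medskip

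\noindent\textbf{Proof proposal.} The plan is to construct a global factorization of $(f_n)_n$ in $\mathcal{N} = L_\infty(G_t)\bar{\otimes}\mathcal{N}_{<t>}$, where $\mathcal{N}_{<t>} := L_\infty(G_{<t>})\bar{\otimes}\mathcal{M}$, by gluing together fiberwise factorizations in $L_2(\mathcal{N}_{<t>}; \ell_\infty)$ obtained for each $y_t \in G_t$. For every $\varepsilon > 0$ and almost every $y_t \in G_t$, the factorization definition of $L_2(\mathcal{N}_{<t>}; \ell_\infty)$ supplies elements $a(y_t), b(y_t) \in L_4(\mathcal{N}_{<t>})$ and a bounded sequence $(y_n(y_t))_n \subset \mathcal{N}_{<t>}$ with
\[
f_n(\cdot, y_t) = a(y_t)\, y_n(y_t)\, b(y_t), \qquad \|a(y_t)\|_4 \sup_n \|y_n(y_t)\|_\infty \|b(y_t)\|_4 \le (1+\varepsilon)\, g(y_t),
\]
where $g(y_t) := \|(f_n(\cdot, y_t))_n\|_{L_2(\mathcal{N}_{<t>}; \ell_\infty)}$. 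By rescaling $a \mapsto \alpha a$, $b \mapsto \alpha^{-1} b$ I may further assume $\|a(y_t)\|_4 = \|b(y_t)\|_4 = \sqrt{(1+\varepsilon) g(y_t)}$ and $\sup_n \|y_n(y_t)\|_\infty \le 1$.

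Next I would assemble the global operators $A, B \in L_4(\mathcal{N})$ and $(Y_n)_n \subset \mathcal{N}$ by declaring $A(y_t) := a(y_t)$, $B(y_t) := b(y_t)$, $Y_n(y_t) := y_n(y_t)$, so that the pointwise identity $f_n = A Y_n B$ holds in $\mathcal{N}$. A direct application of the tensor trace together with the ordinary Fubini theorem then gives
\[
\|A\|_{L_4(\mathcal{N})}^4 = \int_{G_t} \|a(y_t)\|_{L_4(\mathcal{N}_{<t>})}^4\, d\mu_t(y_t) = (1+\varepsilon)^2 \int_{G_t} g(y_t)^2\, d\mu_t(y_t) = (1+\varepsilon)^2 \|g\|_{L_2(G_t)}^2,
\]
and similarly $\|B\|_{L_4(\mathcal{N})} = \|A\|_{L_4(\mathcal{N})}$, while $\sup_n \|Y_n\|_{\mathcal{N}} = \esssup_{y_t} \sup_n \|y_n(y_t)\|_\infty \le 1$. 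Plugging this back into the factorization definition of $\|(f_n)_n\|_{L_2(\mathcal{N}; \ell_\infty)}$ yields
\[
\|(f_n)_n\|_{L_2(\mathcal{N}; \ell_\infty)} \le \|A\|_4 \sup_n \|Y_n\|_\infty \|B\|_4 \le (1+\varepsilon) \|g\|_{L_2(G_t)},
\]
and letting $\varepsilon \to 0$ closes the argument.

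The only genuinely delicate point I anticipate is the measurable dependence of the fiber data $a(y_t), b(y_t), y_n(y_t)$ on $y_t$, which is needed to form legitimate elements of $L_4(\mathcal{N})$ and $\mathcal{N}$. I plan to address this in the standard way: first reduce to the situation where $(f_n)_n$ is a finite sequence of simple tensors (so that $y_t \mapsto f_n(\cdot, y_t)$ takes finitely many values), invoke the factorization theorem on each atom separately, and then extend to the general case by a norm-density and $\varepsilon/2^k$ approximation argument. Alternatively, a measurable-selection argument applied to the compact (in the appropriate topology) set of $\varepsilon$-optimal factorizations produces the required measurable cross-section. Once this technicality is handled, all remaining estimates are routine computations with the tensor trace.
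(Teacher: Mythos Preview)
Your argument is correct and is precisely the standard ``glue fiberwise factorizations'' proof of this Fubini-type inequality; the paper does not give its own proof but simply cites \cite{Hon13}, where the same construction is carried out. The measurability issue you flag is the only nontrivial point and your proposed reduction to finite sequences of simple tensors (where the fiber map $y_t\mapsto f_n(\cdot,y_t)$ takes finitely many values) is the cleanest way to dispose of it.
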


\begin{lemma}\label{5.2}
	Consider a family of uniformly bounded functions  $g_n: G \rightarrow \mathbb{C}$  and a family of operator-valued function  $(f_n)_n$ in $L_2(\mathcal{N} ;\ell_\infty)$.  Then 
	$$\Vert (g_nf_n)_n \Vert_{L_2(\mathcal N;\ell_\infty)}  \le \sup_{ n }\Vert g_n \Vert_{\infty}  \Vert  (f_n)_n \Vert_{L_2(\mathcal N;\ell_\infty)}. $$
\end{lemma}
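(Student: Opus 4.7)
The plan is to mirror the argument of Lemma~\ref{3.3}, with the key observation that each scalar-valued function $g_n : G\to\mathbb C$ sits naturally inside the center of $L_\infty(\mathcal N) = L_\infty(G)\bar\otimes\mathcal M$ via the embedding $g_n\mapsto g_n\otimes 1$. Because $L_\infty(G)$ is abelian, $g_n\otimes 1$ commutes with every element of $L_\infty(\mathcal N)$, so multiplication by $g_n$ preserves any factorization coming from the definition of the maximal norm.

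More precisely, fix $\varepsilon>0$ and invoke the definition of $\Vert(f_n)_n\Vert_{L_2(\mathcal N;\ell_\infty)}$ to obtain a factorization $f_n=ay_nb$ with $a,b\in L_4(\mathcal N)$ and $(y_n)_n\subset L_\infty(\mathcal N)$ satisfying
\[
\Vert a\Vert_4\,\sup_n\Vert y_n\Vert_\infty\,\Vert b\Vert_4\le \Vert(f_n)_n\Vert_{L_2(\mathcal N;\ell_\infty)}+\varepsilon.
\]
Viewing $g_n$ as the central element $g_n\otimes 1\in L_\infty(\mathcal N)$, one obtains the factorization $g_nf_n=a(g_ny_n)b$, in which the middle factor lies in $L_\infty(\mathcal N)$ with $\Vert g_ny_n\Vert_\infty\le \Vert g_n\Vert_\infty\Vert y_n\Vert_\infty$ by submultiplicativity of the operator norm.

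Inserting this factorization into the definition of the maximal norm for $(g_nf_n)_n$ and taking the supremum over $n$ gives
\[
\Vert(g_nf_n)_n\Vert_{L_2(\mathcal N;\ell_\infty)}\le \Vert a\Vert_4\sup_n\Vert g_ny_n\Vert_\infty\Vert b\Vert_4\le \sup_n\Vert g_n\Vert_\infty\bigl(\Vert(f_n)_n\Vert_{L_2(\mathcal N;\ell_\infty)}+\varepsilon\bigr).
\]
Sending $\varepsilon\to 0$ finishes the proof.

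There is essentially no obstacle beyond verifying the centrality of $g_n\otimes 1$, which is immediate from the commutativity of $L_\infty(G)$ and the tensor structure of $\mathcal N$; the scalar case of Lemma~\ref{3.3} corresponds to $g_n$'s that are constant functions, and the argument is literally the same once centrality is invoked.
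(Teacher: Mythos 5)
Your proof is correct and is exactly the argument the paper intends: the paper only remarks that Lemma \ref{5.2} ``follows from a similar argument as in Lemma \ref{3.3},'' and your proposal carries that out, correctly identifying that the one new point beyond the scalar case is the centrality of $g_n\otimes 1$ in $L_\infty(G)\bar{\otimes}\mathcal M$, which lets the factor $g_n$ be absorbed into the middle term $y_n$ of the factorization.
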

%\begin{proof}
	
%	According to the definition of the norm
%	$$ \Vert  (f_n)_n \Vert_{L_2(L_\infty(G)\overline{\otimes} \cm ;\ell_\infty)}.$$
%	 For any $\varepsilon >0$, there is a decomposition $f_n= h_1p_nh_2$, for any $n \ge 1$ such that  
%	 $$\Vert h_1 \Vert_4   \sup_{n}\Vert p_n \Vert_\infty \Vert h_2 \Vert_4  \le \Vert  (f_n)_n \Vert_{L_2(L_\infty(G)\overline{\otimes} \cm ;\ell_\infty)} + \varepsilon . $$
%	 Hence, $g_nf_n$ has a decomposition $g_nf_n= h_1g_np_nh_2$, for any $n \ge 1$,  and 
%	 $$\Vert h_1 \Vert_4   \sup_{n} \Vert g_n \Vert_{\infty} \Vert p_n \Vert_\infty \Vert h_2 \Vert_4  \le \sup_{ n } \Vert g_n \Vert_{\infty}
%	 \Vert  (f_n)_n \Vert_{L_2(L_\infty(G)\overline{\otimes} \cm ;\ell_\infty)} + \varepsilon     \sup_{ n } \Vert g_n \Vert_{\infty} . $$
%	 Now, we get
%	 $$\Vert h_1 \Vert_4   \sup_{n}  \Vert g_np_n \Vert_\infty \Vert h_2 \Vert_4  \le \sup_{ n }\Vert g_n \Vert_{\infty}
%	 \Vert  (f_n)_n \Vert_{L_2(L_\infty(G)\overline{\otimes} \cm ;\ell_\infty)} +\varepsilon  \sup_{ n }\Vert g_n \Vert_{L_\infty(G)}  . $$
%	 By letting $\varepsilon \to 0$,  we deduce
%	 $$\Vert (g_nf_n)_n \Vert_{L_2(L_\infty(G)\overline{\otimes} \cm ;\ell_\infty)}  \le \sup_{ n }\Vert g_n \Vert_{L_\infty(G)}  \Vert  (f_n)_n \Vert_{L_2(L_\infty(G)\overline{\otimes} \cm ;\ell_\infty)}  . $$
%\end{proof}

We now present the proof of Proposition~\ref{1.2}.

\begin{proof}[\bf{Proof of Proposition \ref{1.2}}]
	
	For any $y\in G$, the group $G$ can be partitioned as follows:
	$$G=J_0(y) =\left(  \bigcup_{t=0}^{A}J_t(y) \setminus J_{t+1}(y)  \right) \bigcup J_{A+1}(y).$$
	Applying the triangle inequality yields
	\begin{flalign*}
		\left\|  {\sup_{\vert n \vert =A}}^+ \int_{G} f(x)K_{n^{(s)},P_s}(y,x) d\mu(x)  \right\|_2  
		&\le \sum_{t=0}^{s-1}\left\|  {\sup_{\vert n \vert =A}}^+ \int_{J_t(y) \setminus J_{t+1}(y)}  f(x)K_{n^{(s)},P_s}(y,x) d\mu(x)  \right\|_2  \\
		&+ \sum_{t=s}^{A}\left\|  {\sup_{\vert n \vert =A}}^+
		\int_{J_t(y) \setminus J_{t+1}(y)} f(x)K_{n^{(s)},P_s}(y,x) d\mu(x)  \right\|_2 \\
		&+\left\|  {\sup_{\vert n \vert =A}}^+ \int_{J_{A+1}(y)} f(x)K_{n^{(s)},P_s}(y,x) d\mu(x)  \right\|_2  \\
		&=:\sum_{t=0}^{s-1}B_{1,t}+\sum_{t=s}^{A}B_{1,t}+B_0 . 
	\end{flalign*}
	Without loss of the generality, we may assume $f$ positive. Indeed, if $f$ is not positive, we can reduce the proof of the positive case by decomposing  $f=f_1-f_2+if_3-if_4$ where $f_k$ $(1 \le k\le 4)$  is positive  and $\Vert f_k \Vert_2 \le \Vert f \Vert_2$.

	We divide the rest of the  proof into three steps.
	
	\textbf{Step 1:}  We begin by examining the term $B_0$.  Note that for	 $x\in J_{A+1}(y),$ i.e. $x_i \in j_i(y_i)$ when $0 \le i \le A$ and $k \in \{n^{(s)},\cdots ,n^{(s)}+P_s-1 \}$, it follows that $r_\ell^i(x)=r_\ell^i(y)$ for $0 \le \ell \le A, 0 \le i \le k_\ell -1$. Hence,  Lemma \ref{4.1} allows us to get
	$$D_k(y,x) = \sum_{\ell=0}^{A}\mu^{-1}(J_\ell(y))\sum_{i=0}^{k_\ell-1}\vert r_\ell^i(y) \vert^2 \vert \chi_{k^{(\ell+1)}}(y) \vert^2 . $$
	It is observed that
	$$\sum_{i=0}^{k_\ell-1}\vert r_\ell^i(y) \vert^2 \le \sum_{i=0}^{p_\ell-1}\vert r_\ell^i(y) \vert^2 \le \sum_{i=0}^{p_\ell-1} (d_{\ell}^i)^2 =m_\ell \le \rho(G),$$
	where $d_\ell^i$ is the dimension of the representation corresponding to the character $r_\ell^i$.
	This leads us to estimate
	\begin{flalign*}
		B_0&=\left\| {\sup_{\vert n \vert =A}}^+ \int_{J_{A+1}(y)} f(x)K_{n^{(s)},P_s}(y,x) d\mu(x)  	\right\|_2  \\
		( \text{By Remark} \, \ref{2.3})&\leq \rho(G) \left\| {\sup_{\vert n \vert =A}}^+  \sum_{k=n^{(s)}}^{n^{(s)}+P_s-1} \sum_{\ell=0}^{A} \mu^{-1}(J_\ell(y))   \vert \chi_{k^{(\ell+1)}}(y) \vert^2   \int_{J_{A+1}(y)} f(x) d\mu(x)  	\right\|_2 \\
		( \text{By Lemma} \, \ref{4.3})&= \rho(G) \left\| {\sup_{\vert n \vert =A}}^+ \sum_{k=n^{(s)}}^{n^{(s)}+P_s-1} \sum_{\ell=0}^{A} \mu^{-1}(J_\ell(y))\mu(J_{A+1}(y))   \vert \chi_{k^{(\ell+1)}}(y) \vert^2    E_{A+1}f(y)   	\right\|_2  \\
		(\text{Triangle inequality} )&\leq \rho(G)\left\| {\sup_{\vert n \vert =A}}^+ \sum_{k=n^{(s)}}^{n^{(s)}+P_s-1} \sum_{\ell=0}^{s-1} \mu^{-1}(J_\ell(y))\mu(J_{A+1}(y))   \vert \chi_{k^{(\ell+1)}}(y) \vert^2    E_{A+1}f(y)   	\right\|_2   \\
		&\ \ \ +\rho(G)\left\| {\sup_{\vert n \vert =A}}^+ \sum_{k=n^{(s)}}^{n^{(s)}+P_s-1} \sum_{\ell=s}^{A} \mu^{-1}(J_\ell(y))\mu(J_{A+1}(y))   \vert \chi_{k^{(\ell+1)}}(y) \vert^2    E_{A+1}f(y)   	\right\|_2  \\
		&\eqqcolon \rho(G)(R_1+R_2) .
	\end{flalign*}
	We handle first the term $R_1$. Considering the parameters  $n$ and $k$ present  in the expression, we have that for any  $ j \ge s $, $k_j=n_j$, resulting in
	\begin{flalign}\label{e5.1}
		k^{(j)} = n^{(j)},\quad  \chi_{k^{(s)}}(y)=\chi_{n^{(s)}}(y).
	\end{flalign}
	Note that
	\begin{flalign*}
		&\ \ \ \sum_{k_{\ell+1}=0}^{p_{\ell+1}-1}\cdots \sum_{k_{s-1}=0}^{p_{s-1}-1}|\chi_{k^{(\ell+1)}}(y)|^2=
		\sum_{k_{\ell+1}=0}^{p_{\ell+1}-1}\cdots \sum_{k_{s-1}=0}^{p_{s-1}-1} \prod_{i=\ell+1}^{s-1} \vert r_i^{k_i}(y) \vert^2|\chi_{n^{(s)}}(y)|^2 \\
		&= \prod_{i=\ell+1}^{s-1}\sum_{k_i=0}^{p_i-1}\vert r_i^{k_i}(y) \vert^2 |\chi_{n^{(s)}}(y)|^2=\prod_{i=\ell+1}^{s-1} \mu_i^{-1}(j_i(y))|\chi_{n^{(s)}}(y)|^2,
	\end{flalign*}
	and 
    $$\mu^{-1}(J_\ell(y)) \times \mu_\ell^{-1}(j_\ell(y)) \times\prod_{i=\ell+1}^{s-1} \mu_i^{-1}(j_i(y)) = \mu^{-1}(J_s(y)).$$   
	Recall that $\mu_\ell(G_\ell) =1 $ for any $\ell \in \mathbb{N}$. Thus,
	\begin{flalign*}
			R_1&=\left\| {\sup_{\vert n \vert =A}}^+  \sum_{\ell=0}^{s-1}\mu^{-1}(J_\ell(y))\mu(J_{A+1}(y)) E_{A+1}f(y)\times  \r.\\
			&\ \ \ \ \ \ \times\lf.
		\sum_{k_0=0}^{p_0-1}\cdots\sum_{k_{\ell}=0}^{p_{\ell}-1} \left( \sum_{k_{\ell+1}=0}^{p_{\ell+1}-1}\cdots \sum_{k_{s-1}=0}^{p_{s-1}-1}  \prod_{j=\ell+1}^{s-1} \vert  r_j^{k_j}(y)\vert^2 \right)  \vert \chi_{n^{(s)}}(y) \vert^2 \right\|_2   \\
		&= \left\| {\sup_{\vert n \vert =A}}^+ \sum_{\ell=0}^{s-1}P_{\ell+1}  \mu(J_{A+1}(y)) E_{A+1}f(y) \mu_\ell(j_\ell(y)) \mu^{-1}(J_s(y)) \vert \chi_{n^{(s)}}(y) \vert^2 \right\|_2 \\
		&\leq  \left\| {\sup_{\vert n \vert =A}}^+ \sum_{\ell=0}^{s-1}P_{\ell+1} \mu(J_{A+1}(y)) E_{A+1}f(y) \mu^{-1}(J_s(y)) \vert \chi_{n^{(s)}}(y) \vert^2 \right\|_2 . \\
	\end{flalign*}
    On the other hand,
         \begin{flalign}\label{e5.2}
         	\vert \chi_{n^{(s)}}(y) \vert^2 &\le \sum_{n_s=0}^{p_s-1}\cdots \sum_{n_A=0}^{p_A-1} \vert \chi_{n^{(s)}}(y) \vert^2=\sum_{n_s=0}^{p_s-1}\cdots \sum_{n_A=0}^{p_A-1} \prod_{u=s}^{A} \vert r_u^{n_u}(y) \vert^2   \\
         	&=\prod_{u=s}^{A}\sum_{n_u=0}^{p_u-1}\vert r_u^{n_u}(y) \vert^2 =\prod_{u=s}^{A} \mu_u^{-1}(j_u(y)). \notag
         \end{flalign}
     According to the triangle inequality and  $p_k \ge 2$ for any  $k \in  \mathbb{N}$, we get
		\begin{flalign*}
		R_1&\leq  \left\| {\sup_{\vert n \vert =A}}^+ \sum_{\ell=0}^{s-1}P_{\ell+1} \mu^{-1}(J_s(y)) \mu(J_{A+1}(y)) E_{A+1}f(y) \prod_{u=s}^{A} \mu_u^{-1}(j_u(y))  \right\|_2  \\
		&\leq \sum_{\ell=0}^{s-1}P_{\ell+1} \left\| {\sup_{\vert n \vert =A}}^+   E_{A+1}f(y)  \right\|_2  = \sum_{\ell=1}^{s}P_{\ell}  \left\|  E_{A+1}f  \right\|_2  \\
		&= \sum_{\ell=1}^{s}\frac{P_s}{p_\ell \cdots p_{s-1}}  \left\|  E_{A+1}f  \right\|_2 \le  \sum_{\ell=1}^{s}  \frac{P_s}{2^{s-\ell}}  \left\|   E_{A+1}f \right\|_2  \\
		&=  \sum_{i=0}^{s-1}  \frac{P_s}{2^i}  \left\|   E_{A+1}f \right\|_2 \le  \sum_{i=0}^{\infty}  \frac{P_s}{2^i}  \left\|   E_{A+1}f \right\|_2 \le 2P_s\left\| f\right\|_2  .
	\end{flalign*}

	Next, we estimate the term $R_2$. By following \eqref{e5.1},
	\begin{flalign*}
		R_2&= \left\| {\sup_{\vert n \vert =A}}^+\sum_{k=n^{(s)}}^{n^{(s)}+P_s-1} \sum_{\ell=s}^{A} \mu^{-1}(J_\ell(y))\mu(J_{A+1}(y))   \vert \chi_{n^{(\ell+1)}}(y) \vert^2    E_{A+1}f(y)   	\right\|_2  \\
		&=  \left\| {\sup_{\vert n \vert =A}}^+  P_s\sum_{\ell=s}^{A} E_{A+1}f(y)\mu(J_{A+1}(y))\mu^{-1}(J_\ell(y))  \vert \chi_{n^{(\ell+1)}}(y)\vert^2   \right\|_2 . \\
	\end{flalign*}
Similar as \eqref{5.2}, we have
\begin{flalign*}
	\vert \chi_{n^{(\ell+1)}}(y)\vert^2 \le \prod_{u=\ell+1}^{A} \mu_u^{-1}(j_u(y)).
\end{flalign*}
	And by  Remark \ref{2.3}, we obtain	
	\begin{flalign*}
		R_2&\leq  \left\| {\sup_{\vert n \vert =A}}^+  P_s\sum_{\ell=s}^{A} E_{A+1}f(y)\mu(J_{A+1}(y))\mu_\ell(j_\ell(y))\mu^{-1}(J_{A+1}(y))     \right\|_2 \\
		&\leq  P_s(A-s+1) \left\| {\sup_{\vert n \vert =A}}^+  E_{A+1}f(y)     \right\|_2  \\
		&\leq  P_s(A-s+1) \left\|   f   \right\|_2.
	\end{flalign*}
	Hence,  we conclude
		$$B_0 \le \rho(G)(R_1+R_2) \leq \rho(G)(A-s+3)P_s \Vert f \Vert_2 . $$

	\textbf{Step 2:} We now deal with the term $\sum_{t=0}^{s-1}B_{1,t}$. 
	Consider $0\le t<s\leq A=|n|,k \in \{ n^{(s)},\cdots ,n^{(s)}+P_s-1\},x \in J_t(y) \setminus J_{t+1}(y) $, i.e. $x_t \notin j_t(y_t)$. Lemma~\ref{4.2} implies that for any $\ell \ge t+1 $,
	\begin{flalign}\label{e5.3}
		D_{P_\ell}(y,x)&= \mu^{-1}(J_\ell(y)) \boldsymbol{1}_{J_\ell(y)}(x)  \\
		&=\mu^{-1}(J_\ell(y)) \prod_{i=0}^{t-1}\boldsymbol{1}_{j_i(y)}(x_i) \times \prod_{i=t+1}^{\ell-1}\boldsymbol{1}_{j_i(y)}(x_i) \times \boldsymbol{1}_{j_t(y)}(x_t)  \notag  \\
		&=0.  \notag
	\end{flalign}
	Thus, Lemma~\ref{4.1} yields
	\begin{flalign*}
		D_k(y,x)&= \sum_{\ell =0}^{t-1} \mu^{-1}(J_\ell(y))\sum_{i=0}^{k_\ell -1}\vert r_\ell^i(y) \vert^2
		\prod_{j=\ell +1}^{t-1}\vert r_j^{k_j}(y) \vert^2 
		\chi_{k^{(t)}}(y) \bar{\chi}_{k^{(t)}}(x) \\
		&\ \ \ +\mu^{-1}(J_t(y)) \sum_{i=0}^{k_t-1}r_t^i(y) \bar{r}_t^i(x) \chi_{k^{(t+1)}}(y) \bar{\chi}_{k^{(t+1)}}(x) \\
		&=:\psi(y,x,k_0,\cdots,k_{t-1})\chi_{k^{(t)}}(y) \bar{\chi}_{k^{(t)}}(x)+B_{k,0}(y,x).
	\end{flalign*}
	Observing that 
	$$\sum_{k_t=0}^{p_t-1}
	r_t^{k_t}(y)\bar{r}_t^{k_t}(x)=\mu^{-1}(j_t(y))\boldsymbol{1}_{j_t(y)} (x)=0 . $$
	We then compute that
	\begin{flalign*}
		&\ \sum_{k=n^{(s)}}^{n^{(s)}+P_s-1}\psi(y,x,k_0,\cdots,k_{t-1})\chi_{k^{(t)}}(y) \bar{\chi}_{k^{(t)}}(x) \\
		&=\sum_{k_0=0}^{p_0-1}\cdots\sum_{k_{t-1}=0}^{p_{t-1}-1}\psi(y,x,k_0,\cdots,k_{t-1})\left( \sum_{k_t=0}^{p_t-1}
		r_t^{k_t}(y)\bar{r}_t^{k_t}(x)\right) \times \\
		&\ \ \ \ \ \ \times \sum_{k_{t+1}=0}^{p_{t+1}-1}\cdots\sum_{k_{s-1}=0}^{p_{s-1}-1}\chi_{k^{(t+1)}}(y) \bar{\chi}_{k^{(t+1)}}(x) \\
		&=0.
	\end{flalign*}
	Therefore, using the fact that $k^{(s)}= n^{(s)}$, we have 
	\begin{flalign*}
		K_{n^{(s)},P_s}(y,x)&=\sum_{k=n^{(s)}}^{n^{(s)}+P_s-1}B_{k,0}(y,x) \\
		&=\mu^{-1}(J_t(y)) \sum_{k_0=0}^{p_0-1}\cdots
		\sum_{k_{t-1}=0}^{p_{t-1}-1} \left( \sum_{k_t=0}^{p_t-1}\sum_{i=0}^{k_t-1} r_t^i(y)\bar{r}_t^i(x)  \prod_{j=t+1}^{s-1} \sum_{k_j=0}^{p_j-1}r_j^{k_j}(y) \bar{r}_j^{k_j}(x) \right) \times \\
		&\ \ \ \ \ \ \times \chi_{n^{(s)}}(y) \bar{\chi}_{n^{(s)}}(x) \\
		&=\mu^{-1}(J_t(y))P_t \sum_{i=0}^{p_t-2}(p_t-i-1)r_t^i(y)\bar{r}_t^i(x)
		\left( \prod_{u=t+1}^{s-1} \mu_u^{-1}(j_u(y_u))\boldsymbol{1}_{j_u(y_u)}(x_u)\right)   \times \\
		&\ \ \ \ \ \ \times \chi_{n^{(s)}}(y)\bar{\chi}_{n^{(s)}}(x). 
	\end{flalign*}
	Thus,  only  when $x_\ell \in j_\ell(y_\ell), \ell \in \{0,\cdots ,t-1,t+1,\cdots,s-1\}$,  term $K_{n^{(s)},P_s}$ does not vanish for $x \in J_t(y) \setminus J_{t+1}(y) $.  We denote	
	$$J_s(y_{<t>}) =j_0(y_0)\times \cdots \times j_{t-1}(y_{t-1}) \times j_{t+1}(y_{t+1}) \times \cdots \times   j_{s-1}(y_{s-1})\times \bigtimes_{i=s}^{\infty }G_i,$$ 
	and  recall
	$$G_{<t>}=\bigtimes_{i=0}^{t-1 }G_i \times \bigtimes_{i=t+1}^{\infty }G_i.$$
	Similarly, we denote  $x_{<t>} =(x_{< t}, x_{\ge t+1})$ and set $\widetilde{f}(x_t,x_{<t>}) := f(x)$. 
 	Note that $$\chi_{n^{(s)}}(y)=\chi_{n^{(s)}}(y_{<t>}),$$
	and $$\mu(J_s(y))= \mu(J_s(y_{<t>})) \times \mu_t(j_t(y_t)),$$
	which are natural for $t<s$. By the triangle inequality
	 \begin{flalign*}
	 		B_{1,t} &=P_t \left\| {\sup_{\vert n \vert =A}}^+ \sum_{i=0}^{p_t-2}(p_t-i-1) \mu^{-1}(J_s(y))\mu_t(j_t(y_t)) \times  \r.\\
	 	&\ \ \ \ \ \ \times\lf.
	 	\int_{J_s(y_{<t>}) \times (j_t(y_t))^c} r_t^i(y)\bar{r}_t^i(x) f(x) \chi_{n^{(s)}}(y) \bar{\chi}_{n^{(s)}} (x) d\mu(x)\right\|_2 \\
	 	&\leq P_t\sum_{i=0}^{p_t-2} (p_t-i-1)\left\|  {\sup_{\vert n \vert =A}}^+\mu^{-1}(J_s(y_{<t>}))
	 	\times  \r.\\
	 	&\ \ \ \ \ \ \times\lf. \int_{J_s(y_{<t>}) \times (j_t(y_t))^c} r_t^i(y)\bar{r}_t^i(x)f(x)\chi_{n^{(s)}}(y_{<t>}) \bar{\chi}_{n^{(s)}} (x_{<t>}) d\mu(x)
	 	\right\|_2  .\\
	 \end{flalign*}
 	Lemma 5.1 provides
      \begin{flalign*}
      	B_{1,t}&\le P_t \sum_{i=0}^{p_t-2} (p_t-i-1) \left\|    \left\|{\sup_{\vert n \vert =A}}^+\int_{(j_t(y_t))^c} \left(r_t^i(y)\bar{r}_t^i(x)\right)  \mu^{-1}(J_s(y_{<t>}))  	\times  \r.\r.\\
      	&\ \ \ \ \ \ \times\lf.\lf.
      	\int_{J_s(y_{<t>})} \widetilde{f}(x_t,x_{<t>})\chi_{n^{(s)}}(y_{<t>}) \bar{\chi}_{n^{(s)}} (x_{<t>})    
      	d\mu(x_{<t>})  
      	d\mu_t(x_t)  \right\|_{L_2(G_{<t>};L_2(\mathcal{M}))}	  \right\| _{L_2(G_t)}  \\
      	&\le P_t \sum_{i=0}^{p_t-2} (p_t-i-1) \left\|  \int_{(j_t{(y_t)})^c} \vert r_t^i(y_t)  \vert \vert \bar{r}_t^i(x)\vert   \left\|  {\sup_{\vert n \vert =A}}^+   \mu^{-1}(J_s(y_{<t>})) \times  \r.\r.\\
      	&\ \ \ \ \ \ \times\lf.\lf.
      	\int_{J_s(y_{<t>})} \widetilde{f}(x_t,x_{<t>})\chi_{n^{(s)}}(y_{<t>}) \bar{\chi}_{n^{(s)}} (x_{<t>})    
      	d\mu(x_{<t>})      \right\|_{L_2(G_{<t>};L_2(\mathcal{M}))}  d\mu_t(x_t) \right\|_{L_2(G_t)} .\\
      \end{flalign*}
  Through Lemma~\ref{4.4} and the Cauchy-Schwarz inequality, we obtain 
	 \begin{align*}
	 	 	B_{1,t} &\le P_t\sum_{i=0}^{p_t-2} (p_t-i-1) \left\| \int_{(j_t{(y_t)})^c}\vert r_t^i(y_t)  \vert \vert \bar{r}_t^i(x_t)\vert \left\| f(\cdots,x_t,\cdots) \right\|_{L_2(G_{<t>};L_2(\mathcal{M}))} d\mu_t(x_t)   \right\|_{L_2(G_t)}        \\
	 	 &\leq P_t\sum_{i=0}^{p_t-2} (p_t-i-1) \left\| \int_{G_t} \vert \bar{r}_t^i(x_t)\vert \left\| f(\cdots,x_t,\cdots) \right\|_{L_2(G_{<t>};L_2(\mathcal{M}))} d\mu_t(x_t)  \vert r_t^i(y_t)  \vert \right\|_{L_2(G_t)}  \\
	 	 &\leq P_t \sum_{i=0}^{p_t-2} (p_t-i-1) \left\|   \left\| f \right\|_2     \Vert \bar{r}_t^i(x_t) \Vert_{L_2(G_t)} \vert r_t^i(y_t)  \vert \right\|_{L_2(G_t)} \\
	 	 &= P_t \sum_{i=0}^{p_t-2} (p_t-i-1)  \left\| f \right\|_2 \left\| r_t^i(y_t) \right\|_{L_2(G_t)}= P_t \sum_{i=0}^{p_t-2} (p_t-i-1)  \left\| f \right\|_2 . 
	 \end{align*}	
	Since 
$$\sum_{i=0}^{p_t-2} (p_t-i-1) =\frac{p_t(p_t-1)}{2} \le \frac{(p_t)^2}{2} \le \frac{(m_t)^2}{2} \le \rho(G)^2,$$
we conclude
	$$B_{1,t} \le P_t\rho(G)^2 \Vert f \Vert_2.$$
	
	\textbf{Step 3:} Finally, we turn our attention to the term  $\sum_{t=s}^{A}B_{1,t}$. In this case, we have  $A \ge t \ge s$. Observe that $$x \in J_t(y) \setminus J_{t+1}(y), k \in \{ n^{(s)},\cdots , n^{(s)}+P_s-1\}.$$ 
	By \eqref{e5.3} and Lemma \ref{4.1}, we have
	\begin{align*}
		D_k(y,x)&= \sum_{\ell =0}^{t-1} \mu^{-1}(J_\ell(y))\sum_{i=0}^{k_\ell -1}\vert r_\ell^i(y) \vert^2
		\prod_{j=\ell +1}^{t-1}\vert r_j^{k_j}(y) \vert^2 
		\chi_{k^{(t)}}(y) \bar{\chi}_{k^{(t)}}(x) \\
		&\ \ \ +\mu^{-1}(J_t(y)) \sum_{i=0}^{k_t-1}r_t^i(y) \bar{r}_t^i(x) \chi_{k^{(t+1)}}(y) \bar{\chi}_{k^{(t+1)}}(x) \\
		&=:B_{k,1}(y,x)+B_{k,0}(y,x).
	\end{align*}
	This leads to
	   \begin{align*}
			&\left\| {\sup_{\vert n \vert =A}}^+  \int_{J_t(y) \setminus J_{t+1}(y)} f(x) K_{n^{(s)},P_s}(y,x) d\mu(x) \right\|_2 \\
			\leq& \left\| {\sup_{\vert n \vert =A}}^+ \sum_{k=n^{(s)}}^{n^{(s)}+P_s-1} \int_{J_t(y) \setminus J_{t+1}(y)} f(x)B_{k,1}(y,x)d\mu(x) \right\|_2 \\
			&+\left\| {\sup_{\vert n \vert =A}}^+ \sum_{k=n^{(s)}}^{n^{(s)}+P_s-1} \int_{J_t(y) \setminus J_{t+1}(y)} f(x)B_{k,0}(y,x)d\mu(x) \right\|_2 \\
			=:&N_1+N_2.  
	   \end{align*}
	
	We now estimate  $N_1$. According to Lemma $\ref{4.2}$ and Equation~\eqref{e5.1}, we obtain
	\begin{flalign*}
		&\ \ \ \ \sum_{k_0=0}^{p_0-1}\cdots \sum_{k_{s-1}=0}^{p_{s-1}-1}\sum_{\ell = 0 }^{t-1} \mu^{-1}(J_\ell(y))\prod_{j=\ell+1}^{t-1}\vert r_j^{k_j}(y) \vert^2 \\
		&=\sum_{\ell=0}^{s-2} \mu^{-1}(J_{\ell}(y))\sum_{k_0=0}^{p_0-1}\cdots\sum_{k_\ell=0}^{p_\ell-1}  \left( \sum_{k_{\ell+1}=0}^{p_{\ell+1}-1} \cdots \sum_{k_{s-1}=0}^{p_{s-1}-1}\prod_{j=\ell+1}^{s-1}\vert r_j^{k_j}(y) \vert^2 \right) \prod_{j=s}^{t-1} \vert r_j^{n_j}(y) \vert^2  \\
		&\ \ \ \ \ 
		+\sum_{k_0=0}^{p_0-1}\cdots \sum_{k_{s-1}=0}^{p_{s-1}-1}\sum_{\ell = s-1 }^{t-1} \mu^{-1}(J_\ell(y))\prod_{j=\ell+1}^{t-1}\vert r_j^{n_j}(y) \vert^2   \\
		&\leq \sum_{\ell=0}^{s-2} P_{\ell+1} \mu_\ell (j_\ell (y_\ell)) \mu^{-1}(J_s(y)) \left( \sum_{n_s=0}^{p_s-1}\cdots \sum_{n_{t-1}=0}^{p_{t-1}-1}\prod_{j=s}^{t-1}\vert r_j^{n_j}(y) \vert^2 \right)   \\
		&\ \ \ \ \  +P_s\sum_{\ell=s-1}^{t-1}\mu^{-1}(J_\ell(y)) \left( \sum_{n_{\ell+1}=0}^{p_{\ell+1}-1}\cdots \sum_{n_{t-1}=0}^{p_{t-1}-1}\prod_{j=\ell+1}^{t-1}\vert r_j^{n_j}(y) \vert^2 \right)   \\
		&\leq \sum_{\ell=0}^{s-2} \mu_\ell (j_\ell (y_\ell)) P_{\ell+1} \mu^{-1}(J_t(y)) + P_s\sum_{\ell=s-1}^{t-1} \mu_\ell (j_\ell (y_\ell)) \mu^{-1}(J_t(y))\\
		&=\left(  \sum_{\ell = 0}^{s-2} P_{\ell+1}  + \sum_{\ell=s-1}^{t-1}P_s \right) \mu^{-1}(J_t(y))  \\
		&=\left(  \sum_{\ell=1}^{s-1}P_{\ell}  + (t-s+1)P_s \right) \mu^{-1}(J_t(y))= \left( \sum_{\ell=1}^{s-1}\frac{P_s}{p_\ell \cdots p_{s-1}}   + (t-s+1)P_s \right) \mu^{-1}(J_t(y))  \\
		&\le \left( \sum_{\ell=1}^{s-1} \frac{P_s}{2^{s-\ell}}    + (t-s+1)P_s \right) \mu^{-1}(J_t(y))= \left( \sum_{i=1}^{s-1} \frac{P_s}{2^{i}}    + (t-s+1)P_s \right) \mu^{-1}(J_t(y))  \\
		&\le \left( \sum_{i=1}^{\infty} \frac{P_s}{2^{i}}    + (t-s+1)P_s \right) \mu^{-1}(J_t(y)) = (t-s+2)  P_s\mu^{-1}(J_t(y)) .
	\end{flalign*}
	Applying this estimation, Lemma \ref{5.2} and Equation~\eqref{e5.1},   allows us to calculate
	\begin{flalign*}
		N_1 &= \left\| {\sup_{\vert n \vert =A}}^+ \sum_{k_0=0}^{p_0-1}\cdots  \sum_{k_{s-1}=0}^{p_{s-1}-1}\sum_{\ell = 0 }^{t-1} \mu^{-1}(J_\ell(y))\sum_{i=0}^{k_\ell-1}\vert r_\ell^i(y) \vert^2 \prod_{j=\ell+1}^{t-1}\vert r_j^{k_j}(y) \vert^2 \times  \r. \notag\\
		&\ \ \ \ \ \ \ \times\lf.
		\int_{J_t(y) \setminus J_{t+1}(y)} f(x)\chi_{n^{(t)}}(y) \bar{\chi}_{n^{(t)}}(x)d\mu(x) \right\|_2 \notag\\
		&\leq  \sup_{ \vert n \vert=A }  \left\|  \sum_{k_0=0}^{p_0-1}\cdots \sum_{k_{s-1}=0}^{p_{s-1}-1}\sum_{\ell = 0 }^{t-1} \mu^{-1}(J_\ell(y))\sum_{i=0}^{k_\ell-1}\vert r_\ell^i(y) \vert^2 \prod_{j=\ell+1}^{t-1}\vert r_j^{k_j}(y) \vert^2 \mu(J_t(y)) \right\|_{\infty} \times \\
		&\ \ \ \ \ \ \  \times \left\| {\sup_{\vert n \vert =A}}^+ \mu^{-1}(J_t(y))  \int_{J_t(y) \setminus J_{t+1}(y)} f(x)\chi_{n^{(t)}}(y) \bar{\chi}_{n^{(t)}}(x)d\mu(x) \right\|_2    \notag\\
		&\leq (t-s+2) P_s\left\| {\sup_{\vert n \vert =A}}^+ \mu^{-1}(J_t(y))  \int_{J_t(y) \setminus J_{t+1}(y)} f(x)\chi_{n^{(t)}}(y) \bar{\chi}_{n^{(t)}}(x)d\mu(x) \right\|_2  \notag\\
		&\leq (t-s+2) P_s   \left\| {\sup_{\vert n \vert =A}}^+ \mu^{-1}(J_t(y))  \int_{J_t(y)} f(x)\chi_{n^{(t)}}(y) \bar{\chi}_{n^{(t)}}(x)d\mu(x) \right\|_2  \\
		&\ \ \ \ \ \ \ + (t-s+2) P_s\left\| {\sup_{\vert n \vert =A}}^+ \mu^{-1}(J_t(y))  \int_{J_{t+1}(y)} f(x)\chi_{n^{(t)}}(y) \bar{\chi}_{n^{(t)}}(x)d\mu(x) \right\|_2  	\notag\\
		&=:(t-s+2)P_s( E + F) \notag.
	\end{flalign*}
	It follows from Lemma \ref{4.4} that
     $$E \le \Vert f \Vert_2.$$ 
	 Regarding  the term $F$, by applying the method of estimating $B_{1,t}$ in \textbf{Step 2} and using Lemma \ref{5.1}, we get that
    \begin{flalign*}
    	F&=\left\| {\sup_{\vert n \vert =A}}^+ \mu^{-1}(J_t(y))  \int_{J_t(y_{<t>})\times j_t(y_t)} r_t^{n_t}(y_t)\bar{r}_t^{n_t}(x_t) f(x)\chi_{n^{(t)}}(y_{<t>}) \bar{\chi}_{n^{(t)}}(x_{<t>})d\mu(x) \right\|_2 \\
    	&\le  \left\|   \left\|   {\sup_{\vert n \vert =A}}^+  \mu^{-1} (J_{t}(y_{<t>}))  \int_{j_t(y_t)}   r_t^{n_t}(y_t)  \bar{r}_t^{n_t}(x_t)   
    	\times  \r.\r.\\
    	&\ \ \ \ \ \ \times\lf. \lf.
    	\int_{J_{t}(y_{<t>}) } \widetilde{f}(x_t,x_{<t>})\chi_{n^{(t)}}(y_{<t>}) \bar{\chi}_{n^{(t)}}(x_{<t>})  d\mu(x) \right\|_{L_2(G_{<t>};L_2(\mathcal{M}))}  \right\|_{L_2(G_t)} \\
    	&\le \left\|  \int_{j_t(y_t)}\vert r_t^{n_t}(y_t) \vert \vert  \bar{r}_t^{n_t}(x_t) \vert      \left\|   {\sup_{\vert n \vert =A}}^+  \mu^{-1} (J_{t}(y_{<t>}))    
    	\times  \r.\r.\\
    	&\ \ \ \ \ \ \times\lf. \lf.
    	\int_{J_{t}(y_{<t>}) } \widetilde{f}(x_t,x_{<t>})\chi_{n^{(t)}}(y_{<t>}) \bar{\chi}_{n^{(t)}}(x_{<t>})  d\mu(x_{<t>}) \right\|_{L_2(G_{<t>};L_2(\mathcal{M}))} d\mu_t(x_t)  \right\|_{L_2(G_t)} \\
    	&\le \left\|     \int_{j_t(y_t)}   \vert  \bar{r}_t^{n_t}(x_t) \vert   \left\| f(\cdots,x_t,\cdots)   \right\|_{L_2(G_{<t>};L_2(\mathcal{M}))}       d\mu_t(x_t) \vert r_t^{n_t}(y_t) \vert \right\|_{L_2(G_t)}   \\
    	&\le \left\| \Vert f \Vert_2  \Vert \bar{r}_t^{n_t}(x_t)  \Vert_{L_2(G_t)}   r_t^{n_t}(y_t)\right\|_{L_2(G_t)}= \Vert f \Vert_2.
    \end{flalign*}
	Therefore,
	  $$N_1 \le 2(t-s+2) P_s \Vert f \Vert_2.$$

	 Next we address  $N_2$.  Recall that for $t \ge s$,
	$$\chi_{k^{(t+1)}}(y) \bar{\chi}_{k^{(t+1)}}(x) = \chi_{n^{(t+1)}}(y) \bar{\chi}_{n^{(t+1)}}(x).$$
	Mirroring the approach for  $B_{1,t}$ in \textbf{Step 2}, we deduce that
	\begin{flalign*}
		N_2&=\left\| {\sup_{\vert n \vert =A}}^+ \sum_{k=n^{(s)}}^{n^{(s)}+P_s-1} \mu^{-1}(J_{t+1}(y_{<t>}))  \times  \r.\\
		&\ \ \ \ \ \ \times\lf.
		\sum_{i=0}^{k_t-1} \int_{J_{t+1}(y_{<t>}) \times (j_t(y))^c}r_t^i(y)\bar{r}_t^i(x) f(x) \chi_{n^{(t+1)}}(y) \bar{\chi}_{n^{(t+1)}}(x) d\mu(x)  \right\|_2  \\
		&\leq P_s \sum_{i=0}^{k_t-1}\left\| {\sup_{\vert n \vert =A}}^+ \mu^{-1} (J_{t+1}(y_{<t>})) \int_{J_{t+1}(y_{<t>}) \times (j_t(y))^c}r_t^i(y)\bar{r}_t^i(x) f(x) \chi_{n^{(t+1)}}(y) \bar{\chi}_{n^{(t+1)}}(x) d\mu(x)   \right\|_2   \\
		&= P_s \sum_{i=0}^{k_t-1}\left\| {\sup_{\vert n \vert =A}}^+ \mu^{-1} (J_{t+1}(y_{<t>})) \times  \r.\\
		&\ \ \ \ \ \ \times\lf. \int_{J_{t+1}(y_{<t>}) \times (j_t(y))^c}r_t^i(y_t)\bar{r}_t^i(x_t) \widetilde{f}(x_t,x_{<t>}) \chi_{n^{(t+1)}}(y_{<t>}) \bar{\chi}_{n^{(t+1)}}(x_{<t>}) d\mu(x)   \right\|_2 \\
		&\le P_s \sum_{i=0}^{k_t-1}\left\|   \left\|   {\sup_{\vert n \vert =A}}^+   \int_{(j_t(y))^c}   r_t^i(y_t)   \bar{r}_t^i(x_t)     \mu^{-1} (J_{t+1}(y_{<t>})) 
		\times  \r.\r.\\
		&\ \ \ \ \ \ \times\lf. \lf.
		 \int_{J_{t+1}(y_{<t>}) } \widetilde{f}(x_t,x_{<t>}) \chi_{n^{(t+1)}}(y_{<t>}) \bar{\chi}_{n^{(t+1)}}(x_{<t>})  d\mu(x) \right\|_{L_2(G_{<t>};L_2(\mathcal{M}))}   \right\|_{L_2(G_t)} \\
		&\leq P_s \sum_{i=0}^{k_t-1}   \left\|     \int_{(j_t(y))^c}  \vert r_t^i(y_t) \vert \vert  \bar{r}_t^i(x_t) \vert   \left\|   {\sup_{\vert n \vert =A}}^+   \mu^{-1} (J_{t+1}(y_{<t>})) 
		\times  \r.\r.\\
		&\ \ \ \ \ \ \times\lf.\lf. 
		 \int_{J_{t+1}(y_{<t>}) } \widetilde{f}(x_t,x_{<t>}) \chi_{n^{(t+1)}}(y_{<t>}) \bar{\chi}_{n^{(t+1)}}(x_{<t>})    d\mu(x_{<t>}) \right\|_{L_2(G_{<t>};L_2(\mathcal{M}))} 
		d\mu_t(x_t)  \right\|_{L_2(G_t)}  \\
		&\le P_s \sum_{i=0}^{k_t-1}   \left\|     \int_{(j_t(y))^c}   \vert  \bar{r}_t^i(x_t) \vert   \left\| f(\cdots,x_t,\cdots)   \right\|_{L_2(G_{<t>};L_2(\mathcal{M}))}        d\mu_t(x_t) \vert r_t^i(y_t) \vert \right\|_{L_2(G_t)}   \\
		&\leq  P_s \sum_{i=0}^{k_t-1} \left\| \Vert f \Vert_2 
		 \Vert \bar{r}_t^i(x_t)  \Vert_{L_2(G_t)}    r_t^i(y_t)\right\|_{L_2(G_t)}  \\
		&\leq p_t P_s\Vert f \Vert_2 \le m_tP_s\Vert f \Vert_2 \le \rho(G) P_s\Vert f \Vert_2.
	\end{flalign*}
	Thus, we have
	$$B_{1,t} =N_1+N_2 \le (2(t-s+2)+\rho(G))P_s \Vert f \Vert_2.$$

Finally, validating that $(A-s) \le (A-s)^2,   2 \le \rho(G) \le \rho(G)^2$  enables us to compute
\begin{flalign*}
	\sum_{t=0}^{s-1}P_t &= \sum_{t=0}^{s-1}  \frac{P_s}{p_{t} \cdots p_{s-1}  }  \\
	&\le \sum_{t=0}^{s-1}\frac{P_s}{2^{s-t}} = \sum_{i=1}^{s}\frac{P_s}{2^{i}}  \\
	&\le \sum_{i=1}^{\infty}\frac{P_s}{2^{i}}  = P_s.
\end{flalign*}
We conclude that
	\begin{flalign*}
	&\left\|  {\sup_{\vert n \vert =A}}^+ \int_{G} f(x)K_{n^{(s)},P_s}(y,x) d\mu(x)  \right\|_2 \\
	&\le \sum_{t=0}^{s-1}\rho(G)^2P_t\Vert f \Vert_2 +\sum_{t=s}^{A}(2(t-s+2)+\rho(G))P_s \Vert f \Vert_2 
	+\rho(G)(A-s+3)P_s \Vert f \Vert_2 \\
	&\le \rho(G)^2 \left(  \sum_{t=0}^{s-1}P_t +\sum_{t=s}^{A}2(t-s+3)P_s 
	+(A-s+3)P_s \right)  \Vert f \Vert_2 \\
	&\leq \rho(G)^2\left( 1+2\times \frac{(A-s+1)(A-s+6)}{2}+ (A-s+3) \right)  P_s\Vert f \Vert_2 \\
	&\leq \rho(G)^2 \left( (A-s)^2+7(A-s)+6 + (A-s)+4 \right)  P_s\Vert f \Vert_2 \\
	&\leq \rho(G)^2 \left( 9(A-s)^2 +10 \right)  P_s\Vert f \Vert_2 \\
	&\le 10\rho(G)^2 ((A-s)^2+1)P_s \Vert f \Vert_2.
	\end{flalign*}
	Hence, we complete the proof of  Proposition \ref{1.2}.
\end{proof}

\subsubsection*{Acknowledgment}
	All authors were supported by the Natural Science Foundation of China (No.12031004). F. Ding and G. Hong were supported by the National Natural Science Foundation of China (No. 12071355, No. 12325105). X. Wang was partially supported by National Research Foundation of Korea (NRF) Grant NRF-2022R1A2C1092320, NRF Grant No.2020R1C1C1A01009681.

\end{document}